\newtheorem{thm}{Theorem}[section]
 \newtheorem{cor}[thm]{Corollary}
 \newtheorem{lem}[thm]{Lemma}
 \newtheorem{prop}[thm]{Proposition}
\theoremstyle{definition}
 \newtheorem{ex}[thm]{Example}
 \newtheorem{rmk}[thm]{Remark}
\theoremstyle{theorem}
\newcommand{\Z}{\mathbb{Z}}
\newcommand{\R}{\mathbb{R}}
\newcommand{\CC}{\mathbb{C}}
\newcommand{\PP}{\mathbb{P}}
\newcommand{\cA}{{\mathcal A}}
\newcommand{\Hh}{{\mathcal H}}
\newcommand{\ot}{\otimes}
\newcommand{\Ke}{{\rm Ker}\,}
\newcommand{\del}{\partial}
\newcommand{\delb}{{\bar \partial}}
\newcommand{\mub}{{\bar \mu}}
\newcommand{\Img}{\mathrm{Im}}
\newcommand{\Dol}{\mathrm{Dol}}
\newcommand{\dR}{\mathrm{dR}}
\title[Topological and geometric aspects of almost K\"{a}hler manifolds]
{Topological and geometric aspects of almost K\"{a}hler manifolds via harmonic theory}
\author[J. Cirici]{Joana Cirici}
\address[J. Cirici]{Department of Mathematics and Computer Science, Universitat de Barcelona\\
Gran Via 585\\
08008 Barcelona }
\email{jcirici@ub.edu}
\author[S. Wilson]{Scott O. Wilson}
  \address[S.~Wilson]{Department of Mathematics, Queens College, City University of New York, 65-30 Kissena Blvd., Flushing, NY 11367}
  \email{scott.wilson@qc.cuny.edu}
\thanks{J. Cirici would like to acknowledge partial support from the AEI/FEDER, UE (MTM2016-76453-C2-2-P) and the Serra H\'{u}nter Program.
}
\thanks{S. Wilson acknowledges support provided by a PSC-CUNY Award, jointly funded by The Professional Staff Congress and The City University of New York.}
\keywords{Almost K\"{a}hler manifolds, K\"{a}hler identities, K\"{a}hler package, Hodge decomposition, hard Lefschetz, harmonic forms, almost complex manifolds, symplectic manifolds}
\subjclass[2010]{32Q60, 53C15, 53D05}
\begin{document}

\begin{abstract}  The well-known K\"{a}hler identities naturally extend to the non-integrable setting.  This paper deduces several geometric and topological consequences of these extended identities for compact almost K\"{a}hler manifolds.
 Among these are identities of various Laplacians, generalized Hodge and Serre dualities, a generalized hard Lefschetz duality, and a Lefschetz decomposition, all on the space of $d$-harmonic forms of pure bidegree.  There is also a generalization of Hodge Index Theorem for compact almost K\"ahler $4$-manifolds. In particular, these provide topological bounds on the dimension of the space of $d$-harmonic forms of pure bidegree, as well as several new obstructions to the existence of a symplectic form compatible with a given almost complex structure.

\end{abstract}

\maketitle


\section{Introduction}

K\"{a}hler manifolds play a central role at the intersection of complex, symplectic and Riemannian geometry. Their striking set of properties arise most primitively from a set of purely local commutation relations, known as the \textit{K\"{a}hler identities}. In the compact setting, the theory of elliptic operators allows one to transfer these local statements into a set of surprising cohomological properties, called the \textit{K\"{a}hler package}.

In this paper we consider a set of \textit{almost K\"{a}hler identities} and establish a corresponding \textit{almost K\"{a}hler package} on the spaces of harmonic forms of pure bidegree. Topological and geometric implications are deduced in the compact case, since such harmonic forms include into the de Rham cohomology. The package includes relations among various Laplacians, generalized Hodge and Serre dualities, a generalized Hodge Theorem and hard Lefschetz duality, as well as a generalized Hodge Index Theorem for almost K\"ahler $4$-manifolds. Additional results are described in what follows.

Recall that the exterior differential of an almost complex manifold has four components, $d = \mub + \delb + \del + \mu$,
where $\mub$ and $\mu$ arise from the Nijenhuis tensor and vanish if and only if the structure is integrable.
In the presence of an almost Hermitian metric, there are formal adjoints for each component, and associated Laplacians.
Also, the fundamental $(1,1)$-form defines a Lefschetz operator $L$.
This leads to a natural set of almost K\"{a}hler identities involving the operators 
$\mub$, $\mu$, $\delb$, $\del$, $L$ and their adjoints. Such identities
have been previously noted by de Barolomeis and Tomassini in \cite{BaTo} (see also 
for instance \cite{Donaldson}, \cite{KotschickBourbaki} for the case of $\delb$, $\del$, $L$ and their adjoints). We remark that similar looking (but very different) identities are described by Verbitsky in \cite{Verbitsky} in the nearly K\"{a}hler setting, though these do not apply to the present context. 

We show that the almost K\"{a}hler identities imply, among other Laplacian relations, that 
\[\Delta_\delb+\Delta_\mu=\Delta_\del+\Delta_\mub.\]
We also show that the Laplacian with respect to the exterior differential satisfies
\[\Delta_d=2(\Delta_\delb+\Delta_\mu)+\text{mixed-bidegree terms}.\]
In the integrable case, the $\mub$- and $\mu$-Laplacians as well as the mixed bidegree terms vanish and one recovers the well-known K\"ahler identities $\Delta_\delb=\Delta_\del$
and $\Delta_d=2\Delta_\delb$.

In the compact case, the theory of harmonic forms allows one to translate the above local results into geometric and topological statements for almost K\"{a}hler manifolds, as we next explain.
Let $\delta$ denote one of the components $\mub, \delb, \del, \mu$. Define the space of \textit{$\delta$-harmonic forms} in bidegree $(p,q)$ by letting 
\[\Hh_\delta^{p,q}:=\Ke(\Delta_\delta)\cap \cA^{p,q},\]
where $\cA^{p,q}$ denotes the space of $(p,q)$-forms.
A priori, these spaces depend on the metric, which is indicated by the use of $\mathcal{H}$, conventional in Hodge theory.
Note that for compact almost Hermitian manifolds, the spaces $\Hh_\delb^{p,q}$ and $\Hh_\del^{p,q}$ are finite-dimensional by elliptic operator theory but in general, the spaces $\Hh_\mu^{p,q}$ and $\Hh_\mub^{p,q}$ are infinite-dimensional whenever they are non-zero.

We will consider the spaces of \textit{$\delb$-$\mu$-harmonic forms} given by the intersections
\[\Hh_\delb^{p,q}\cap \Hh_\mu^{p,q}.\]
These are identified with the kernel of the self-adjoint elliptic operator 
given by
$\Delta_\delb+\Delta_\mu$.
We will denote by
\[\ell^{p,q}:=\dim \left(\Hh_\delb^{p,q}\cap \Hh_\mu^{p,q}\right)=
\Ke(\Delta_\delb+\Delta_\mu)\cap\cA^{p,q}\]
the dimensions of these spaces.
In the compact integrable case, these are just the Hodge numbers of the manifold.

\newtheorem*{J0}{\normalfont\bfseries Theorem \textbf{\ref{AKharmonicequal}}}
\begin{J0}
 For any compact almost K\"ahler manifold of dimension $2m$ 
 and for all $(p,q)$, we have identities
 \[\Hh_d^{p,q}=\Hh_\delb^{p,q}\cap \Hh_\mu^{p,q}=\Hh_\del^{p,q}\cap\Hh_\mub^{p,q}.\]
 Also, the following dualities hold:
 \begin{enumerate}
  \item \emph{(Complex conjugation)}. We have equalities
  \[\Hh_\delb^{p,q}  \cap \Hh_\mu^{p,q}  =  \Hh_\delb^{q,p}  \cap \Hh_\mu^{q,p}.\]
\item \emph{(Hodge duality)}. The Hodge $\star$-operator induces isomorphisms
\[
\star : \Hh_\delb^{p,q}  \cap \Hh_\mu^{p,q} \to \Hh_\delb^{m-q,m-p}  \cap \Hh_\mu^{m-q,m-p}.
\]
\item \emph{(Serre duality)}. There are isomorphisms
\[\Hh_\delb^{p,q}  \cap \Hh_\mu^{p,q} \cong  \Hh_\delb^{m-p,m-q}  \cap \Hh_\mu^{m-p,m-q}.\]
 \end{enumerate}
 \end{J0}
 
The symmetries of the Hodge diamond for compact K\"{a}hler manifolds generalize here, giving 
\[\ell^{p,q}=\ell^{q,p}=\ell^{m-q,m-p}=\ell^{m-p,m-q}.\]
For almost K\"{a}hler manifolds, the spaces of $\delb$-$\mu$-harmonic forms are, in general, strictly contained in the spaces of $d$-harmonic forms of total degree $p+q$. In particular, in the compact case, we obtain inequalities
\[\sum_{p+q=k}\ell^{p,q}\leq b^k,\]
where $b^k$ denote the complex $k$-Betti number of the manifold.

Let us remark that the numbers $\ell^{*,*}$ are often non-trivial. For example, for any left-invariant almost K\"{a}hler structure on the Kodaira-Thurston manifold (see Subsection \ref{KT}), the numbers $\ell^{*,*}$ are given by  
\[
\xymatrix@C=.1pc@R=.1pc{
 &&1\\
 &1&&1\\
 0&&3&&0\\
 &1&&1\\
 &&1
}
\] 
Using Theorem \ref{AKharmonicequal} together with the various dualities of the spaces $\Hh_d^{p,q}$ we obtain better topological bounds:

\newtheorem*{J2}{\normalfont\bfseries Theorem \textbf{\ref{AKbounds}}}
\begin{J2}
For any compact almost K\"ahler manifold of dimension $2m$, the following is satisfied:
\begin{enumerate}
\item In odd degrees, we have 
\[\sum_{p+q=2k+1}\ell^{p,q}=2\sum_{0\leq p\leq k}\ell^{p,2k+1-p}\leq b^{2k+1}.\]
\item In even degrees, we have \[\sum_{p+q=2k}\ell^{p,q}=2\sum_{0\leq p<k}\ell^{p,2k-p}+\ell^{k,k}\leq b^{2k},\]
with $\ell^{k,k}\geq 1$ for all $k\leq m$.
\end{enumerate}
\end{J2}

The numbers $\ell^{p,q}$, and more generally the spaces 
$\Hh^{p,q}_d$, relate to various almost complex invariants studied in the literature. They inject into the cohomology spaces $H_J^{p,q}$ of Draghici, Li and Zhang. These consist of those cohomology classes which can be represented by a
complex closed form of type $(p,q)$ (see \cite{DrLiZh}). The spaces $\Hh^{p,q}_d$ also inject into the Dolbeault cohomology groups $H_{\Dol}^{p,q}$ for almost complex manifolds introduced by the authors in \cite{CWDol}. In fact, they inject into any page of the Fr\"{o}licher-type spectral sequence defined in 
\cite{CWDol} and in particular, into the de Rham page $H^{p,q}_{\dR}:=E_\infty^{p,q}$.
All these inclusions are, in general, strict.

The spaces $\Hh_d^{p,q}$ are also related to other invariants when taking the total degree. They inject into the spaces of symplectic harmonic forms $\Hh_{sym}^{p+q}$ introduced by Brylinski in \cite{Brylinski} and further exploited by Tseng and Yau in \cite{TY1,TY2}. In fact, there is an identity $\Hh_d^{p,q}=\Hh_{sym}^{p+q}\cap \cA^{p,q}$ when restricting to bidegree $(p,q)$-forms.
In the recent work \cite{TaTopreprint}, Tardini and Tomassini extend much of
the almost K\"{a}hler package developed here to the harmonic spaces 
$\Hh_{\delb+\mu}^{p+q}$, defined via the Laplacian of the operator $\delb+\mu$. 
These spaces properly contain the spaces $\Hh_d^{p,q}$ in the general almost K\"{a}hler case.

Theorem \ref{AKbounds} provides new obstructions for the existence of symplectic structures compatible with a given almost complex structure,
complementing the known topological-type obstructions arising from the symplectic form and the associated almost complex structure, as well as the results of Taubes via the theory of Seiberg-Witten invariants \cite{Taubes1}, \cite{Taubes2} (see also Gompf's review on symplectic obstruction theory \cite{Gompf}). 
For instance, for any almost K\"{a}hler structure on a manifold with the same cohomology as $\CC\PP^2$, its numbers $\ell^{*,*}$ coincide with the classical Hodge numbers of its canonical K\"{a}hler structure.

The new obstructions are most easily exploited by considering the following metric-independent spaces of \textit{holomorphic $p$-forms}, defined by 
\[\Omega^p_\delb:=\Ke(\delb)\cap \cA^{p,0}.\]
Theorem \ref{AKbounds} implies that for any almost complex structure on a manifold with $b^1\leq 1$,
a necessary condition for it to admit a compatible symplectic structure is that there are no holomorphic $1$-forms. 
This follows after showing that, for compact almost K\"{a}hler manifolds, the space of holomorphic $1$-forms coincides with the space of $(1,0)$-forms that are $d$-harmonic.

Holomorphic forms on almost K\"{a}hler manifolds are shown below to satisfy even further special properties.
Chen gave the first example of
how the Hodge numbers of a compact K\"{a}hler manifold affect its fundamental group  \cite{Chen}, \cite{Chen2}. We extend Chen's result to the non-integrable case, showing that if $M$ is a compact connected almost K\"{a}hler manifold satisfying 
\[\dim\Omega^1_\delb >\dim\Omega^2_\delb +1,\]
then $\pi_1(M)$ contains a free subgroup of rank $\geq 2$. In particular,
$\pi_1(M)$ is not solvable. This is an additional obstruction to finding a symplectic structure compatible with a given almost complex structure.

The almost K\"{a}hler identities also lead to a generalization of hard Lefschetz duality on $\Hh_d^{*,*}$. It is well known that symplectic manifolds generally do not satisfy hard Lefschetz duality in  cohomology. In fact, Mathieu points out that a necessary condition is that all odd Betti numbers must be even \cite{Mathieu}.
Nevertheless,  for compact almost K\"ahler manifolds we prove:

\newtheorem*{I3}{\normalfont\bfseries Theorem \textbf{\ref{HLef}}}
\begin{I3}[Generalized Hard Lefschetz Duality] 
For any compact almost K\"ahler manifold of dimension $2m$, the operators $\{L,\Lambda, H = [L,\Lambda] \}$ define a 
finite dimensional representation of $\mathfrak{sl}(2,\CC)$ on  
\[
\bigoplus_{p,q \geq 0} \Hh_d^{p,q} = \bigoplus_{p,q \geq 0} \Hh_\delb^{p,q} \cap \Hh_\mu^{p,q} = 
\Ke(\Delta_\delb+\Delta_\mu).
\]
Moreover, for all $0 \leq p \leq k\leq m$,
\[
L^{m-k} :  
\Hh_d^{p,k-p} 
\stackrel{\cong}{\longrightarrow} 
\Hh_d^{p+m-k,m-p}
\]
are isomorphisms. 
\end{I3}

Generalized Hard Lefschetz implies that the numbers $\ell^{p,q}$ satisfy
\[
\ell^{p,q} \leq \ell^{p+1,q+1} \leq \dots \leq \ell^{p+j,q+ j}
\]
for all $0 \leq p, q \leq m$ and $p + q + 2j \leq m$.
In particular, note that
if $b^k = 0$ for some $k \leq m$, then $ \ell^{p,q}=0$ for all $p+q \leq k$ with $p+q \equiv k \mod 2$.

In dimension $4$, the various symmetries yield only a few interesting numbers $\ell^{*,*}$ to consider, and the Betti number bounds involving $\ell^{1,1}$ can be improved using in the index of the intersection pairing:

\newtheorem*{I4}{\normalfont\bfseries Theorem \textbf{\ref{4mnfldIndex}}}
\begin{I4}
[Generalized Hodge Index Theorem, c.f. \cite{HZ}]
 For any compact $4$-dimensional almost K\"ahler manifold $M$, 
\[
\ell^{1,1}=  b_2^- + 1 \text{ and } b_2^+\geq 1,\]
 where the intersection pairing on $H^2(M;\CC)$ has index $(b_2^+,b_2^-)$. 
 \end{I4}

 A proof of the equality $\ell^{1,1}=  b_2^- + 1$ first appeared in the preprint \cite{HZ}, 
using Theorem \ref{AKharmonicequal} below, and following the establishment of an inequality, $\ell^{1,1} \leq b_2^- + 1$, first proved in a preprint of the present paper.

Using this result, we are able to conclude here that for any compact almost K\"{a}hler 4-manifold,  all of the numbers $\ell^{p,q}$  are metric-independent among all almost K\"{a}hler metrics that are compatible with the given almost complex structure.
This is related to a problem listed by Hirzebruch, in \cite{Hirzebruch}, and attributed to Kodaira-Spencer.
The problem includes two questions. The first asks whether the
dimensions of $\Hh_\delb^{p,q}$ are metric-independent on any almost complex manifold. This has been very recently answered negatively by Holt and Zhang in \cite{HZ}. The second part of the problem asks for 
a definition of metric-independent numbers generalizing Dolbeault cohomology to the non-integrable case. Note that while in the almost K\"{a}hler setting, the numbers $\ell^{p,q}$ are reasonable candidates, 
for general almost complex manifolds these numbers do not faithfully generalize Dolbeault cohomology.
Instead, the theory presented in \cite{CWDol} seems to exhibit more of the properties desirable for such a Dolbeault theory.

Finally, in the physics literature, the algebraic structure present on the differential forms of a K\"ahler manifold is often referred to as the $N=2$ supersymmetry algebra; see \cite{Zumino}, \cite{AF}, \cite{HKLR}. It has been noted that when the integrability condition is dropped, the supersymmetry is \emph{partially broken} \cite{FGR}.  The almost K\"{a}hler identities obtained here show that additional symmetries are indeed present, albeit in a more subtle and interesting way depending on the failure of integrability. We hope the results here may open new possibilities for physicists' construction and study of supersymmetric theories.

 This paper is organized as follows. In Section \ref{SecPrelim} we collect preliminaries on the differential forms on almost
 complex and Hermitian manifolds. In Section \ref{SecAKIds} we present the fundamental almost K\"{a}hler identities.
 In Section \ref{Section_consequences} we prove Theorems \ref{AKharmonicequal} and \ref{AKbounds}
 and detail several topological and geometric consequences.
 Section \ref{SecLef} is devoted to hard Lefshetz (Theorem \ref{Lefduality}) and the Hodge index for 4-manifolds (Theorem, \ref{4mnfldIndex}).
 Lastly, in Section \ref{SecNil}, we exhibit applications of the theory which show that the theoretical results allow us to perform new calculations, reaching new theoretical conclusions even in the case of nilmanifolds.
 In particular, the numbers $\ell^{*,*}$
 are computed using the topological bounds, and the failure of the symmetry 
 $ \Delta_\delb + \Delta_\mu =  \Delta_\del + \Delta_\mub$  is shown to detect almost complex structures that do not admit almost K\"ahler structures.

 \subsection*{Acknowledgments}
The authors thank Thomas Holt for pointing out an important sign error that appeared in a preprint of this paper.
We would also like to thank the anonymous referee for his suggestions.

\section{Differential forms on almost complex manifolds}\label{SecPrelim}

Let $(M,J)$ be an almost complex manifold and let
\[\cA^k:=\cA_{\dR}^k(M)\otimes_\R \CC=\bigoplus_{p+q=k}\cA^{p,q}\]
be the bigraded algebra of complex valued differential forms on $M$. The exterior differential decomposes as 
\[
d=  \mub+\delb+\del+\mu,
\]
with the components  $\mub$  and $\delb$ being complex conjugate to $\mu$ and $\del$, respectively. 
 Note that each component of $d$ is a derivation, with bidegrees given by 
\[
|\mub|=(-1,2), \,  |\delb|=(0,1), \, |\del|=(1,0), \text{ and } \,  |\mu|=(2,-1).
\]
In particular,  $\mub$ and $\mu$ are linear over functions.

One can show that $\mub + \mu$ is equal, up to a scalar, to the dual of the Nijenhuis tensor. In fact, 
\[
\mub + \mu = - \frac 1 4 \left( N_J \ot id_\mathbb{C} \right)^*,
\]
where the right hand side has been extended over all forms as a derivation. Since both sides are derivations, it suffices to check this on $1$-forms, which can be done using Cartan's formula relating the exterior differential and Lie bracket.
In particular, $J$ is integrable if and only if $\mub \equiv 0$.

Expanding the equation $d^2 =0$ we obtain the following set of equations:
\newcommand\numberthis{\addtocounter{equation}{1}\tag{$\vartriangle$}}
\begin{align*}
\mu^2 = 0 \\
\mu \del + \del \mu =  0 \\
\mu \delb + \delb \mu +\del^2 =  0 \\
\mu \mub + \del \delb + \delb \del + \mub \mu  = 0  \numberthis \label{eq;drelations}\\
\mub \del + \del \mub +\delb^2 =  0 \\
\mub \delb + \delb \mub =  0 \\
\mub^2 = 0
\end{align*}

For any almost Hermitian manifold $(M, J, \langle \,\, , \,\, \rangle)$ 
of dimension $2m$ 
there is an associated \textit{Hodge-star operator} 
\[
\star: \cA^{p,q} \to \cA^{m-q,m-p} \quad \quad \textrm{defined by} \quad \quad  
\alpha \wedge \star \bar \beta = \langle \alpha , \beta \rangle \textrm{vol}
\] 
where $\textrm{vol}$ is the volume form determined by the metric. 

There is an associated \emph{fundamental $(1,1)$-form} defined by
\[
\omega(X,Y): =  \langle J X,Y \rangle
\]
and \textit{Lefschetz operator}  
\[ L : \cA^{p,q} \longrightarrow \cA^{p+1,q+1},\text{ defined by }L(\eta) := \omega \wedge \eta.\]
It has adjoint $\Lambda = L^* = \star^{-1} L \star$. It  is well known that $\{ L, \Lambda, H=[L,\Lambda]\} $ defines a representation of $\mathfrak{sl}(2,\CC)$, with Lefschetz decomposition
on complex $k$-forms
\[
\cA^k = \bigoplus_{i \geq 0} L^i  P^{k-2i},
\]
where $P^j = \Ke( \Lambda ) \cap \cA^j$. The map  
\[L^{m-k}: P^k \longrightarrow \cA^{2m-k}\]
is injective for $k \leq m$ (see for instance \cite{Weil}).

The operators $\delta =  \mub, \delb, \del, \mu,$ and $d$ have $L_2$-adjoint operators $\delta^*$, and when $M$ is closed, one may check that 
\[
\mub^* = - \star \mu \star  \quad \text{and} \quad  \delb^* = - \star \del \star.
\]
The latter equation is well known. The first equation is checked similarly, by using the definition of $\star$ and the fact that $\mu$ is a derivation.

Define the \textit{$\delta$-Laplacian} by letting
\[
\Delta_\delta:= \delta \delta^*+\delta^*\delta.
\]
It satisfies 
\[
\star \Delta_{\bar \delta} = \Delta_{ \delta} \star .
\]
For all $p,q$, we will denote by
\[\Hh_\delta^{p,q}:= \Ke(\Delta_\delta) \cap \cA^{p,q} =  \Ke(\delta)\cap \Ke(\delta^*) \cap \cA^{p,q}\]
the space of \textit{$\delta$-harmonic forms} in bidegree $(p,q)$.

Note that $\Hh_\mu^{p,q}$ and $\Hh_\mub^{p,q}$  are infinite dimensional whenever they are non-zero, since $\mu$ and $\mub$ are linear over functions, 
but $\Hh_\delb^{p,q}$ and $\Hh_\del$ are finite dimensional on a compact manifold, by elliptic theory. Likewise, the space
\[\Hh^{p,q}_d:=\Ke(\Delta_d)\cap\cA^{p,q}\]
is finite-dimensional and
there is an inclusion
\[\bigoplus_{p+q=n}\Hh_d^{p,q}\subseteq\Hh_d^{p+q}.\]
which in general is strict.
In the compact case,
this inclusion gives topological bounds
\[\sum_{p+q=k}\dim \Hh_d^{p,q}\leq b^n\text{ for all }k\geq 0.\]

In contrast with the above inclusion, 
note that if $\delta$ is any of the operators $\mub$, $\delb$, $\del$ or $\mu$, there is an 
orthogonal direct sum decomposition
 \[
 \Ke(\Delta_\delta) \cap \cA^{k} = \bigoplus_{p+q =k} \Hh_\delta^{p,q},
 \]
since now both $\delta$ and $\delta^*$ are of pure bidegree. Note that a priori, all the spaces $\Hh_\delta^{*,*}$ and $\Hh_d^{*,*}$ depend on the chosen metric.

\begin{rmk}\label{Dolrema}
In \cite{CWDol} we define Dolbeault cohomology groups
$H_{\Dol}^{p,q}(M)$ associated to any almost complex manifold $M$.
Moreover, there is a Fr\"{o}licher-type spectral sequence $\{E_r^{*,*}(M)\}$ converging to complex de Rham cohomology and in particular, one obtains a bigrading on de Rham cohomology.
The spaces $\Hh_d^{p,q}$ are easily proven to inject in $E_r^{p,q}(M)$ for any $r\geq 1$. In particular, we always have inequalities 
\[\dim \Hh_d^{p,q}\leq h_{\dR}^{p,q}\text{ where }
h_{\dR}^{p,q}:=\dim E_\infty^{p,q}.\]
These inequalities refine the above topological bounds by almost complex invariants.
\end{rmk}

The following lemmas concerning almost Hermitian manifolds will be used later to deduce topological consequences for almost K\"ahler manifolds.

 \begin{lem} \label{dualities}
 For any compact almost Hermitian manifold of dimension $2m$
 and for all $0 \leq p,q \leq  m$ the following dualities hold:
 \begin{enumerate}
\item \emph{(Complex conjugation)}. We have equalities
\[\Hh_d^{p,q}   =  \Hh_d^{q,p}.\]
\item \emph{(Hodge duality)}. The Hodge $\star$-operator induces isomorphisms
\[
\star:\Hh_d^{p,q} \longrightarrow \Hh_d^{m-q,m-p}.
\]
\item \emph{(Serre duality)}. There are isomorphisms
\[\Hh_d^{p,q}  \cong  \Hh_d^{m-p,m-q}\text{ and }\Hh_\delta^{p,q} \to \Hh_\delta^{m-p,m-q}\text{ for }\delta=\mub,\delb,\del,\mu.\]
 \end{enumerate}
\end{lem}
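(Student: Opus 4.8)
The plan is to prove each of the three dualities by reducing them to properties of the Hodge star operator and the known behavior of the various Laplacians under conjugation and $\star$. The key structural facts I would rely on are already recorded in the excerpt: the Hodge star maps $\cA^{p,q}$ to $\cA^{m-q,m-p}$, it intertwines the conjugate Laplacians via $\star\Delta_{\bar\delta}=\Delta_\delta\star$, and the four components $\mub,\delb,\del,\mu$ are complex conjugate in pairs. First I would set up the adjoint identities $\mub^*=-\star\mu\star$ and $\delb^*=-\star\del\star$, together with their conjugates, since on a compact manifold these identify the relevant Laplacians under the symmetries in play.

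\textbf{Complex conjugation.} For part (1) I would observe that since $\overline{\cA^{p,q}}=\cA^{q,p}$, complex conjugation is a real-linear isomorphism sending $(p,q)$-forms to $(q,p)$-forms. The claim $\Hh_d^{p,q}=\Hh_d^{q,p}$ must be read as saying that a form $\alpha\in\cA^{p,q}$ lies in $\Ke(\Delta_d)$ if and only if its conjugate $\bar\alpha\in\cA^{q,p}$ does. This follows because $d=\mub+\delb+\del+\mu$ is a real operator (it commutes with conjugation), hence so is $d^*$ and therefore $\Delta_d$; thus $\Delta_d\alpha=0\iff\Delta_d\bar\alpha=0$. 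Since $\Delta_d$ preserves pure bidegree on these harmonic spaces (as $\alpha$ is already $d$-harmonic and of pure bidegree), conjugation gives the stated equality of dimensions, and I would phrase the "equality" as the identification induced by conjugation.

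\textbf{Hodge duality.} For part (2) the essential input is $\star\Delta_{\bar\delta}=\Delta_\delta\star$. Applying this for $\delta=d$ and using that $d$ is self-conjugate, I get $\star\Delta_d=\Delta_d\star$, so $\star$ commutes with $\Delta_d$ up to the conjugation symmetry already established in part (1). Concretely, for $\alpha\in\Hh_d^{p,q}$ I would show $\Delta_d(\star\alpha)=0$, using $\star\Delta_d=\Delta_d\star$ together with part (1) to handle the conjugate; since $\star$ sends $\cA^{p,q}$ to $\cA^{m-q,m-p}$ and is an isomorphism with inverse (up to sign) $\star$ again, this yields the desired isomorphism $\star:\Hh_d^{p,q}\to\Hh_d^{m-q,m-p}$.

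\textbf{Serre duality.} For part (3) I would compose the previous two: Serre duality $\Hh_d^{p,q}\cong\Hh_d^{m-p,m-q}$ is obtained as the composite of Hodge duality $\star:\Hh_d^{p,q}\to\Hh_d^{m-q,m-p}$ followed by complex conjugation $\Hh_d^{m-q,m-p}=\Hh_d^{m-p,m-q}$ from part (1) (or equivalently the map $\alpha\mapsto\overline{\star\alpha}$). The same composite works verbatim for each single component $\delta\in\{\mub,\delb,\del,\mu\}$, provided I match each $\delta$ with its conjugate correctly: $\star$ interchanges $\Hh_\delta$ with $\Hh_{\bar\delta}$ by the relation $\star\Delta_{\bar\delta}=\Delta_\delta\star$, and conjugation interchanges $\Hh_\delta^{p,q}$ with $\Hh_{\bar\delta}^{q,p}$, so their composite preserves $\delta$ and lands in the bidegree $(m-p,m-q)$ as claimed. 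The main obstacle, and the step I would write most carefully, is bookkeeping the bidegree shifts and the conjugate/adjoint pairings so that the two operations compose to return the \emph{same} operator $\delta$ rather than its conjugate; once the relation $\star\Delta_{\bar\delta}=\Delta_\delta\star$ and the conjugation symmetry are in hand, no analysis beyond this tracking is required.
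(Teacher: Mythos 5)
Your proof is correct and follows essentially the same route as the paper's (very terse) argument: conjugation symmetry of the Laplacians for part (1), the relation between $\star$ and the (conjugate) Laplacians for part (2), and the composite $\alpha\mapsto\overline{\star\alpha}$ for part (3). The only cosmetic difference is that the paper phrases everything componentwise via $\Ke(\Delta_\delta)\cap\cA^{p,q}=\Ke(\Delta_{\bar\delta})\cap\cA^{q,p}$ and $\star\Delta_{\bar\delta}=\pm\Delta_\delta\star$ (using Lemma \ref{intersections} to reduce $\Hh_d^{p,q}$ to an intersection of the $\Hh_\delta^{p,q}$), whereas you argue directly with $\Delta_d$ being a real operator commuting with $\star$; both are valid.
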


\begin{proof}
The first duality follows from the identity
\[\Ke(\Delta_\delta)\cap \cA^{p,q}=\Ke(\Delta_{\bar\delta})\cap \cA^{q,p}\] 
for any $\delta=\mub,\delb,\del,\mu$.
Hodge duality follows from this same identity together with the relation
$\star \Delta_{\bar \delta} = \pm \Delta_{ \delta} \star$,
which also proves the Serre dualities.
\end{proof}

\begin{lem}\label{intersections}
 For any compact almost Hermitian manifold 
we have equalities 
\begin{enumerate}[itemsep=2mm]
\item $\Hh_d^{p,q}=\Ke(\Delta_\mub+\Delta_\delb+\Delta_\del+\Delta_\mu)\cap\cA^{p,q}=\Ke(\Delta_\mub+\Delta_\delb)\cap(\Delta_\del+\Delta_\mu)\cap\cA^{p,q}$,
\item $\Hh_\delb^{p,q}\cap \Hh_\mu^{p,q}=\Ke(\Delta_\delb+\Delta_\mu)\cap\cA^{p,q}$ and
\item $\Hh_\del^{p,q}\cap \Hh_\mub^{p,q}=\Ke(\Delta_\del+\Delta_\mub)\cap\cA^{p,q}$.
\end{enumerate}
\end{lem}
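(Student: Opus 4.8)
The plan is to prove each of the three equalities in Lemma \ref{intersections} using the same underlying principle: for a finite sum of $\delta$-Laplacians, each of which is a nonnegative self-adjoint operator, the kernel of the sum equals the intersection of the individual kernels. Concretely, for nonnegative self-adjoint operators $A_i$ on an inner product space, $\sum_i \langle A_i \alpha, \alpha \rangle = \langle (\sum_i A_i)\alpha, \alpha\rangle$, and since each summand is nonnegative, the total vanishes if and only if each $\langle A_i \alpha, \alpha\rangle$ vanishes; and $\langle \Delta_\delta \alpha, \alpha \rangle = \|\delta \alpha\|^2 + \|\delta^* \alpha\|^2$, so this forces $\delta \alpha = \delta^* \alpha = 0$, i.e. $\alpha \in \Ke(\Delta_\delta)$. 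This is the one elementary fact I would isolate and apply repeatedly.

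For part (2), I would apply this principle directly to $A_1 = \Delta_\delb$ and $A_2 = \Delta_\mu$ restricted to $\cA^{p,q}$, concluding that $\Ke(\Delta_\delb + \Delta_\mu) \cap \cA^{p,q} = (\Ke \Delta_\delb \cap \Ke \Delta_\mu) \cap \cA^{p,q} = \Hh_\delb^{p,q} \cap \Hh_\mu^{p,q}$, which is exactly the asserted equality and moreover is already used implicitly in the paper's definition of $\ell^{p,q}$. Part (3) is identical with $\Delta_\del$ and $\Delta_\mub$ in place of $\Delta_\delb$ and $\Delta_\mu$.

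For part (1), the key observation is that $\Delta_d = \Delta_\mub + \Delta_\delb + \Delta_\del + \Delta_\mu$ when these are regarded as operators acting within a fixed bidegree. Here I would need to recall that $\Delta_d = dd^* + d^*d$ with $d = \mub+\delb+\del+\mu$; expanding gives the four pure Laplacians together with cross terms $\delta_i \delta_j^* + \delta_j^* \delta_i$ for $i \neq j$. The point is that each cross term shifts bidegree by a nonzero amount, so on a fixed $\cA^{p,q}$ the diagonal part $\Delta_\mub + \Delta_\delb + \Delta_\del + \Delta_\mu$ is precisely the bidegree-preserving component of $\Delta_d$, and since $\alpha \in \cA^{p,q}$ with $\Delta_d \alpha \in \cA^{p,q}$ forces the off-diagonal contributions to match up appropriately. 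The cleanest route is: an element $\alpha \in \cA^{p,q}$ lies in $\Hh_d^{p,q}$ iff $d\alpha = d^*\alpha = 0$; projecting $d\alpha = \mub\alpha + \delb\alpha + \del\alpha + \mu\alpha$ onto its four distinct bidegree components $(p-1,q+2),(p,q+1),(p+1,q),(p+2,q-1)$ shows $d\alpha = 0$ iff each component vanishes, and similarly for $d^*\alpha$; hence $\Hh_d^{p,q} = \Ke(\mub+\delb+\del+\mu)\cap\Ke(\mub^*+\delb^*+\del^*+\mu^*)\cap\cA^{p,q} = \bigcap_\delta \Hh_\delta^{p,q}$, giving the first equality. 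The second equality of part (1) then groups these four into $(\Delta_\mub+\Delta_\delb)$ and $(\Delta_\del+\Delta_\mu)$ and reapplies the nonnegativity principle, with the minor fix that the stated intersection should read $\Ke(\Delta_\mub+\Delta_\delb)\cap\Ke(\Delta_\del+\Delta_\mu)\cap\cA^{p,q}$.

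The main obstacle, and the only genuinely nontrivial point, is justifying that the cross terms in the expansion of $\Delta_d$ are irrelevant on a fixed bidegree — that is, carefully tracking bidegrees to see that $d\alpha = 0$ and $d^*\alpha = 0$ decouple componentwise. Once the bidegree bookkeeping is in place, everything reduces to the positivity argument, which is routine; so I would state the positivity lemma once at the outset and invoke it three times rather than repeating the $\|\delta\alpha\|^2 + \|\delta^*\alpha\|^2$ computation.
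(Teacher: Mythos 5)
Your proposal is correct and follows essentially the same route as the paper's proof: the bidegree-decoupling of $d\alpha=0$ and $d^*\alpha=0$ into their four pure-type components gives part (1), and the nonnegativity identity $\sum_{\delta}\langle\Delta_\delta\alpha,\alpha\rangle=\sum_{\delta}\bigl(\|\delta\alpha\|^2+\|\delta^*\alpha\|^2\bigr)$ gives the kernel-of-sum equals intersection-of-kernels statements in parts (1)--(3). You are also right that the displayed statement has a typo and should read $\Ke(\Delta_\mub+\Delta_\delb)\cap\Ke(\Delta_\del+\Delta_\mu)\cap\cA^{p,q}$.
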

\begin{proof}
 If a form $\alpha$ of type $(p,q)$ satisfies $d\alpha=0$ and $d^*\alpha=0$,
then the four components of $d$ and $d^*$ also vanish by bidegree reasons.
Therefore, for any almost Hermitian manifold we have
\[
\Hh_\delb^{p,q} \cap \Hh_\mu^{p,q}  \cap \Hh_\del^{p,q} \cap \Hh_\mub^{p,q}=\Hh_d^{p,q} ,
\]
since the space on the left is in the eightfold intersection of kernels of $\mub, \delb, \del, \mu$ and their adjoints.

Let $S$ be a subset of $\{\mub,\delb,\del,\mu\}$, then
\[\alpha \in \Ke\left(\sum_{\delta\in S}\Delta_\delta\right)\]
if and only if 
\[0=\sum_{\delta\in S}\langle \Delta_\delta \alpha,\alpha\rangle=
\sum_{\delta\in S}\left(||\delta \alpha||^2+||\delta^*\alpha||^2\right)
\]
so all the components $\delta$ and $\delta^*$, for all $\delta\in S$, vanish on $\alpha$.
This proves the remaining identities.
\end{proof}

We will consider the spaces of \textit{$\delb$-$\mu$-harmonic forms} given by the intersections
\[\Hh_\delb^{*,*}\cap \Hh_\mu^{*,*}.\]
By the previous lemma, these are identified with the kernel of the self-adjoint elliptic operator 
given by
$\Delta_\delb+\Delta_\mu$.
We denote by
\[\ell^{p,q}:=\dim \left(\Hh_\delb^{p,q}\cap \Hh_\mu^{p,q}\right)\]
the dimensions of these spaces.
Note that in the integrable case, these are just the Hodge numbers of the manifold.

\section{Almost K\"ahler identities}\label{SecAKIds}

An \textit{almost K\"ahler manifold} is by definition an almost Hermitian manifold such that the associated $(1,1)$-form is closed. Equivalently, an almost K\"ahler manifold is a symplectic manifold with a compatible metric, i.e. there is an induced almost complex structure which is compatible with both the symplectic form and the metric. 
 
On an almost K\"ahler manifold, the so-called \textit{K\"{a}hler identities}, involving the differential
operator $\delb$, the Lefschetz operator $L$, and their complex conjugates and adjoints, 
hold just as in the case of K\"{a}hler manifolds.
These are proven in Weil's book \cite{Weil}, where indeed, the integrability condition is not used.
More recent references often prove the K\"{a}hler identities by
reducing the proof to a computation in $\CC^m$, thus restricting to the integrable setting 
(see for instance \cite{GrHa}, \cite{Voisin}, see also Remark 3.1.14 of \cite{Huy}).

In this section, we retake Weil's approach and give analogous identities involving the operators 
$\mu$ and $\mub$. From these, we obtain several commutation relations
involving the four components $\mub$, $\delb$, $\mu$ and $\del$ of the differential,
as well as various relations between Laplacians.

In what follows we define the graded commutator of operators $A$ and $B$ by
\[
[A,B] = AB-(-1)^{\textrm{deg(A)deg(B)}}BA
\]
 where $deg(A)$ denotes the total degree of $A$.

\begin{prop} \label{AKidentities}
 For any almost K\"ahler manifold the following identities hold:
\begin{enumerate}[itemsep=2mm]
\item $[L,\mub] = [L,\mu] = 0$ and $[\Lambda,\mub^*] = [\Lambda, \mu^*] = 0$.
\item $[L,\delb] = [L,\del] = 0$ and $[\Lambda,\delb^*] = [\Lambda, \del^*] = 0$.
\item $[L, \mub^*] = i \mu $, $[L, \mu^* ] = - i \mub$ and $[\Lambda , \mub] = i \mu^*$, $[\Lambda , \mu] = - i \mub^*$.
\item $[L, \delb^*] = -i \del $, $[L, \del^*] =  i \delb$ and $[\Lambda , \delb] = -i \del^*$, $[\Lambda , \del] = i \delb^*$.
\end{enumerate}
\end{prop}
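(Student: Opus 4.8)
The plan is to split the eight relations into the purely formal identities (1)--(2), which express that $L$ (hence $\Lambda$) commutes with each component of $d$, and the genuinely first-order identities (3)--(4), which package into the almost K\"ahler analogue of the classical identity $[\Lambda,d]=-(d^c)^*$. For (1) and (2) I would argue as follows. Since $\omega\in\cA^{1,1}$ is closed, its four bidegree components $\mub\omega,\delb\omega,\del\omega,\mu\omega$ lie in the pairwise distinct spaces $\cA^{0,3},\cA^{1,2},\cA^{2,1},\cA^{3,0}$, so $d\omega=0$ forces $\delta\omega=0$ for every $\delta\in\{\mub,\delb,\del,\mu\}$. As $\delta$ is a derivation and $\omega$ has even total degree, $\delta(\omega\wedge\eta)=(\delta\omega)\wedge\eta+\omega\wedge\delta\eta=L(\delta\eta)$, i.e. $[L,\delta]=0$; this gives the un-starred half of (1)--(2). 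Taking formal adjoints (using the $\star$-formulas for $\delta^*$ recorded above, so that compactness is not needed) turns $[L,\delta]=0$ into $[\Lambda,\delta^*]=0$, which is the remaining half.

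The content of (3)--(4) is concentrated in the single master identity
\[ [\Lambda,d]=-(d^c)^*, \qquad d^c:=J^{-1}dJ, \]
where $J$ acts on $\cA^{p,q}$ as multiplication by $i^{p-q}$. A bidegree computation gives $d^c=i(\delb+\mu-\del-\mub)$, hence $-(d^c)^*=i(\delb^*+\mu^*-\del^*-\mub^*)$. Since $\Lambda$ has bidegree $(-1,-1)$, the four summands of $[\Lambda,d]$, namely $[\Lambda,\mub],[\Lambda,\delb],[\Lambda,\del],[\Lambda,\mu]$, carry the pairwise distinct bidegrees $(-2,1),(-1,0),(0,-1),(1,-2)$, matching exactly the terms $\mu^*,\del^*,\delb^*,\mub^*$ on the right. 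Reading off each bidegree therefore yields
\[ [\Lambda,\mub]=i\mu^*,\quad [\Lambda,\delb]=-i\del^*,\quad [\Lambda,\del]=i\delb^*,\quad [\Lambda,\mu]=-i\mub^*, \]
which are precisely the four $\Lambda$-relations of (3) and (4). The four remaining $L$-relations follow by taking formal adjoints (equivalently, by decomposing the adjoint master identity $[L,d^*]=d^c$); for instance the adjoint of $[\Lambda,\delb]=-i\del^*$ is $[L,\delb^*]=-i\del$, and $[\Lambda,\mub]=i\mu^*$ gives $[L,\mub^*]=i\mu$. One may also move between the two relations inside each of (3) and (4) by complex conjugation, using $\overline{\mub}=\mu$, $\overline{\delb}=\del$ and $\bar i=-i$. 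Thus the whole proposition reduces to establishing the master identity.

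The main obstacle is proving $[\Lambda,d]=-(d^c)^*$ without assuming integrability, which is exactly where I would follow Weil. Although $[\Lambda,d]$ is a priori first order, one writes $d=\sum_i e(\theta^i)\nabla_i$ in a local orthonormal coframe $\{\theta^i\}$ with the Levi-Civita connection and combines this with the purely algebraic pointwise commutation relations between $\Lambda$ and exterior multiplication $e(\alpha)$ by a $1$-form, valid on any almost Hermitian vector space. The only way integrability could intervene is through the derivative terms measuring the failure of $\omega$ to be parallel; the crucial point, already present in Weil's computation, is that these organize into an expression proportional to $d\omega$, so that the hypothesis $d\omega=0$---rather than the far stronger $\nabla\omega=0$---is all that is used. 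Running this argument on a general almost complex manifold automatically produces the two extra components $[\Lambda,\mub]$ and $[\Lambda,\mu]$, which are absent in Weil's integrable setting but nonzero here. Checking that no step secretly invokes $\mub\equiv 0$ and that the signs emerge as stated is the delicate part of the argument.
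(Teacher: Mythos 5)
Your proposal is correct and follows essentially the same route as the paper: parts (1)--(2) via $d\omega=0$ forcing each bidegree component of $\omega$'s differential to vanish plus the derivation property, and parts (3)--(4) by decomposing the master identity $[\Lambda,d]=-(d^c)^*$ (which, since $\star$ commutes with $\mathbb{I}$, is literally the paper's $[\Lambda,d]=\star\,\mathbb{I}^{-1}d\,\mathbb{I}\,\star$) into its four bidegree components and then taking adjoints and conjugates. The only cosmetic difference is that the paper cites the primitive-decomposition proof of the master identity (Weil/Huybrechts), which uses only $d\omega=0$, rather than the coframe computation you sketch at the end.
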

 
\begin{proof}
Since $\omega \in \cA^{1,1}$ is $d$-closed we have $\mub \, \omega = 0$, and since $\mub$ is a derivation, $[\mub,L] =0$. The remaining cases in the first statement follow by taking complex conjugates and adjoints, since $\omega$ is real.
The proof for the second statements is identical and well known.

For the third and fourth statements, using the primitive decomposition of the exterior algebra of the manifold, and the fact that $d\omega=0$, it is well known that 
\[
[\Lambda , d ] = \star \, \mathbb{I}^{-1} \, d \, \mathbb{I} \, \star ,
\]
where $\mathbb{I}$ is the operator that acts on $(p,q)$-forms by multiplication by $i^{p-q}$ (c.f. \cite{Huy} Proposition 3.1.12, p.121-122).  In bidegree $(p,q)$ we have $\mathbb{I}_{p,q}^{-1} = (-1)^{p-q} \mathbb{I}_{p,q}$, so conjugating an operator of bidegree $(r,s)$ by $\mathbb{I}$ acts by multiplication by $(-i)^{r-s}$. Then using $\bar \delta^* = - \star \delta \star$ for $\delta= \mub, \delb, \del,$ and $\mu$, it follows that
\begin{align*}
\star \, \mathbb{I}^{-1} \, \mub \, \mathbb{I} \, \star &= i \mu^*, \\
\star \, \mathbb{I}^{-1} \, \delb \, \mathbb{I} \, \star &= -i\del^*, \\
\star \, \mathbb{I}^{-1} \, \del \, \mathbb{I} \, \star &=  i\delb^*, \\
\star \, \mathbb{I}^{-1} \, \mu \, \mathbb{I} \, \star &=-i\mub^*.
\end{align*}
Then $d = \mub + \delb + \del + \mu$ implies the third and fourth statements involving $\Lambda$. The statements involving $L$ follow by taking adjoints.
\end{proof}

\begin{rmk} 

The above almost K\"{a}hler identities were also previously established by de Bartolomeis and Tomassini (c.f. \cite{BaTo}, Lemma $3.4$) by first
proving $[d, \Lambda ]= d^s$, where  $d^s$ is the symplectic adjoint of $d$. 
In this way, they also obtain the first two identities in Proposition \ref{AKidentities2} below, (c.f. \cite{BaTo}, Lemma $3.7$).
\end{rmk}

We next deduce the following relations concerning the various components of $d$ and their adjoints.
It is helpful to use the graded Jacobi identity:
\[
[A,[B,C]] = [[A,B],C] + (-1)^{\textrm{deg(A)deg(B)}} [B,[A,C]].
\]

\begin{prop} \label{AKidentities2}
For any almost K\"ahler manifold the following identities hold:
\begin{enumerate}[itemsep=2mm]
\item $[\mub, \mu^*] = [\mu, \mub^*] = 0 $.
\item $[\mub, \del^*] = [\delb,\mu^*]$ and $[\mu, \delb^*] = [\del,\mub^*]$.
\item $ [\del, \delb^*] = [ \mub^*,\delb] +  [\mu,\del^*] $ and $ [\delb, \del^*] = [ \mu^*,\del] + [\mub,\delb^*] $.
\end{enumerate}
\end{prop}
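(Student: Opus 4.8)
The plan is to derive each identity in Proposition~\ref{AKidentities2} by combining the first-order almost K\"ahler identities of Proposition~\ref{AKidentities} with the structure equations~\eqref{eq;drelations} obtained from $d^2=0$, using the graded Jacobi identity as the main bookkeeping tool. The unifying idea is that each bracket of a component of $d$ with the adjoint of another component can be rewritten, via Proposition~\ref{AKidentities}, as a bracket involving $L$ or $\Lambda$, and then Jacobi lets us move these operators around and feed in the $d^2=0$ relations.

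\textbf{Statement (1).} First I would prove $[\mub,\mu^*]=0$. Using part (3) of Proposition~\ref{AKidentities}, write $\mu^* = -i[\Lambda,\mu]$, so that
\[
[\mub,\mu^*] = -i\,[\mub,[\Lambda,\mu]].
\]
Applying the graded Jacobi identity to expand the inner bracket gives
\[
[\mub,[\Lambda,\mu]] = [[\mub,\Lambda],\mu] + (-1)^{\deg(\mub)\deg(\Lambda)}[\Lambda,[\mub,\mu]].
\]
The term $[\mub,\mu]$ should be controlled by the $d^2=0$ relation $\mub\mu+\mu\mub = -(\del\delb+\delb\del)$ in~\eqref{eq;drelations} (the degree signs must be tracked to convert between anticommutators and graded commutators), while $[\mub,\Lambda] = -[\Lambda,\mub]^{\pm} = \mp i\mu^*$ comes again from Proposition~\ref{AKidentities}(3). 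The hope is that the two resulting terms cancel, giving $0$; the companion identity $[\mu,\mub^*]=0$ follows either by the symmetric computation or by taking adjoints.

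\textbf{Statements (2) and (3).} For these I would run the same machine. For instance, to get $[\del,\delb^*] = [\mub^*,\delb]+[\mu,\del^*]$, substitute $\delb^* = i[\Lambda,\del]$ (or the analogous expression from Proposition~\ref{AKidentities}(4)) and expand $[\del,[\Lambda,\del]]$ by Jacobi; the inner bracket $[\del,\del]$ (i.e. $2\del^2$) is not zero but is governed by $\mu\delb+\delb\mu+\del^2=0$ from~\eqref{eq;drelations}, which is precisely how the $\mu$ and $\mub^*$ terms should enter. The second identity in~(3) is the adjoint (or complex conjugate) of the first, and the two identities in~(2) likewise form an adjoint/conjugate pair, so I expect only about half of the computations to be independent.

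\textbf{Main obstacle.} The hard part will not be any single clever step but the careful management of the graded signs: each component of $d$ has odd total degree, the operators $L,\Lambda$ have even total degree, and the graded commutator and Jacobi conventions introduce signs $(-1)^{\deg A\deg B}$ that must be reconciled with the signs in the $d^2=0$ relations, several of which are genuine anticommutators (e.g. $\mu\delb+\delb\mu+\del^2=0$) rather than the graded brackets appearing in the statement. The real risk is a sign error of exactly the type acknowledged in the paper's thanks to Thomas Holt, so I would verify each relation by independently checking the bidegrees of both sides (using $|\mub|=(-1,2)$, $|\mu|=(2,-1)$, etc.) and, where possible, cross-checking one identity against its conjugate/adjoint partner to catch inconsistencies.
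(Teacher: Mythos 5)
Your overall strategy---rewrite each adjoint via Proposition \ref{AKidentities}, expand with the graded Jacobi identity, and feed in the $d^2=0$ relations---is exactly the paper's, and your sketches of statements (2) and (3) match its proof (for (3) the paper writes $\delb^*=-i[\Lambda,\del]$, note the sign, turns $[\del,[\Lambda,\del]]$ into $i[\del^2,\Lambda]$, and substitutes $\del^2=-[\mu,\delb]$, just as you describe; the remaining identities are obtained by conjugation and adjoints, as you expect). However, the one computation you carry out in detail, statement (1), starts from a wrong identity, and the cancellation you hope for does not occur. Proposition \ref{AKidentities}(3) gives $[\Lambda,\mub]=i\mu^*$, i.e.\ $\mu^*=-i[\Lambda,\mub]$, whereas you substitute $\mu^*=-i[\Lambda,\mu]$. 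Since $[\Lambda,\mu]=-i\mub^*$, the operator $-i[\Lambda,\mu]$ is $-\mub^*$, not $\mu^*$ (it even has bidegree $(1,-2)$ rather than $(-2,1)$, so the bidegree check you propose at the end would have caught this). Consequently your displayed expansion computes $[\mub,-\mub^*]=-\Delta_\mub$, which is not zero in general: the term $[[\mub,\Lambda],\mu]$ gives $-i\Delta_\mu$ and the term $[\Lambda,[\mub,\mu]]=-[\Lambda,[\del,\delb]]$ gives $i(\Delta_\del-\Delta_\delb)$, and these sum to $-i\Delta_\mub$ rather than cancelling.

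The correct argument for (1) is both simpler and uses a different input from the one you plan on. With $\mu^*=i[\mub,\Lambda]$ one gets $[\mub,\mu^*]=i[\mub,[\mub,\Lambda]]$, and the graded Jacobi identity collapses this to a multiple of $[[\mub,\mub],\Lambda]=2[\mub^2,\Lambda]=0$; the relation doing the work is $\mub^2=0$, and the mixed relation $\mu\mub+\mub\mu=-(\del\delb+\delb\del)$ that you intend to invoke plays no role in statement (1). So the gap is concrete: the key term $[\mub,\mu]$ in your expansion should never appear, and with it present the identity $[\mub,\mu^*]=0$ cannot be derived. Once the substitution is corrected, the rest of your plan goes through as in the paper.
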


\begin{proof}
For the first statement
\[
[\mub,\mu^*] = i [\mub,[\mub, \Lambda]] = 0,
\]
and the second follows by conjugation or adjoint.

Next we have
\[
[\mub, \del^*] = i [\mub, [\Lambda,\delb]] = i  [[\mub,\Lambda],\delb] + i [\Lambda, [\mub, \delb]]
\]
by the graded Jacobi identity. Since $[\mub, \delb]=0$, and $[\mub,\Lambda] = -i \mu^*$, this becomes
\[
[\mub, \del^*] = [\mu^*, \delb] = [\delb, \mu^*].
\]
Then $[\mu, \delb^*] = [\del,\mub^*]$ by conjugation or adjoint. 

The next two claims are also equivalent by conjugation and adjoint. We'll prove the first one. First, using $[\Lambda, \del] = i \delb^*$, we compute
\[
[\del, \delb^*] = -i [ \del , [\Lambda, \del]] = i [ \del^2, \Lambda] = i [\Lambda, [\mu ,\delb]]
\]
where in the last step we used $[\mu ,\delb]+ \del^2 = 0$. By the graded Jacobi identity,
\[
[\del, \delb^*] = i [[\Lambda, \mu] ,\delb] + i [\mu, [\Lambda ,\delb]].
\]
Now using $[\Lambda, \mu]= -i \mub^*$ and $ [\Lambda ,\delb] = -i \del^*$ the result follows.
\end{proof}

Note that in the first statement $[\mub, \mu^*] = 0$ is a zeroth-order (metric-dependent) condition which obstructs an almost Hermitian manifold from being symplectic, whereas $d \omega =0$ is a first order (metric-independent) condition. We next deduce several relations concerning various Laplacians. 

\begin{prop} \label{AKDeltas}
For any almost K\"ahler manifold the following identities hold:
\begin{enumerate}[itemsep=2mm]
\item $\Delta_{\mub +\mu} = \Delta_\mub + \Delta_\mu$.
\item $ \Delta_\delb + \Delta_\mu = \Delta_\del + \Delta_\mub$.
\item $\Delta_d = 2 \left( \Delta_\delb + \Delta_\mu+  [\mub, \del^*]  + [\mu, \delb^*] +
[\del, \delb^*] + [\delb, \del^*]  \right) $.
\end{enumerate}
\end{prop}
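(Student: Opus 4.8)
The plan is to prove the three identities in turn, using (2) as the engine that feeds (3). Throughout I would write each Laplacian as a graded commutator $\Delta_\delta = [\delta,\delta^*]$ and manipulate everything using the brackets supplied by Propositions \ref{AKidentities} and \ref{AKidentities2}, the graded Jacobi identity, and the relations \eqref{eq;drelations}.

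For (1), I would expand $\Delta_{\mub+\mu} = [\mub+\mu,\mub^*+\mu^*]$. The diagonal brackets reassemble to $\Delta_\mub+\Delta_\mu$, and the remaining cross terms are precisely $[\mub,\mu^*]+[\mu,\mub^*]$, both of which vanish by Proposition \ref{AKidentities2}(1). This case is immediate.

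The heart of the matter is (2). I would start from $\Delta_\del = [\del,\del^*]$ and substitute $\del^* = i[\Lambda,\delb]$, which is the identity $[\Lambda,\delb]=-i\del^*$ of Proposition \ref{AKidentities}(4). Applying the graded Jacobi identity to $[\del,[\Lambda,\delb]]$ yields $[[\del,\Lambda],\delb] + [\Lambda,[\del,\delb]]$. The first bracket uses $[\del,\Lambda] = -[\Lambda,\del] = -i\delb^*$ and collapses to $-i\Delta_\delb$. The second bracket is where non-integrability enters: the relation $\del\delb+\delb\del = -(\mu\mub+\mub\mu)$ from \eqref{eq;drelations} gives $[\del,\delb] = -[\mu,\mub]$, so that $\Delta_\del = \Delta_\delb - i[\Lambda,[\mu,\mub]]$. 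A second application of graded Jacobi, now using $[\Lambda,\mu]=-i\mub^*$ and $[\Lambda,\mub]=i\mu^*$ from Proposition \ref{AKidentities}(3), evaluates $[\Lambda,[\mu,\mub]] = i(\Delta_\mu-\Delta_\mub)$. Combining gives $\Delta_\del = \Delta_\delb + \Delta_\mu - \Delta_\mub$, which is exactly (2); note that in the integrable case $[\mu,\mub]=0$ and this recovers the classical $\Delta_\del = \Delta_\delb$.

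For (3), I would expand $\Delta_d = [d,d^*] = \sum_{\delta,\delta'}[\delta,\delta'^*]$ over the four components of $d$. The four diagonal terms give $\Delta_\mub+\Delta_\delb+\Delta_\del+\Delta_\mu$, which by (2) equals $2(\Delta_\delb+\Delta_\mu)$, producing the global factor of $2$. The twelve off-diagonal terms organize into six mutually adjoint pairs. The pair $[\mub,\mu^*]+[\mu,\mub^*]$ vanishes by Proposition \ref{AKidentities2}(1). For the rest I would feed in Proposition \ref{AKidentities2}(2)–(3), rewriting e.g. $[\mub^*,\delb]=[\delb,\mub^*]$ since both operators are odd, to see that the pair $\{[\delb,\del^*],[\del,\delb^*]\}$ splits as the sum of the pair involving $\mub^*,\delb$ and the pair involving $\mu^*,\del$, while identity (2) makes the two pairs mixing $\{\mub,\mu\}$ with $\{\del,\delb\}$ coincide. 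Collecting, the off-diagonal sum reduces to exactly twice $[\mub,\del^*]+[\mu,\delb^*]+[\del,\delb^*]+[\delb,\del^*]$, giving (3).

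I expect the main obstacle to be the sign bookkeeping in (2): correctly tracking the $(-1)^{\deg\deg}$ factors through the two nested graded Jacobi identities and through $[\del,\Lambda]=-[\Lambda,\del]$, since a single sign slip would kill the $\mu$–$\mub$ correction and spuriously return the integrable identity $\Delta_\del=\Delta_\delb$. As a consistency check I would confirm the result against the $\star$-conjugation relation $\star\Delta_{\bar\delta}=\Delta_\delta\star$ and against identity (1).
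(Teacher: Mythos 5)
Your proposal is correct and follows essentially the same route as the paper: part (1) by expanding the cross terms, part (2) by combining the $[\Lambda,\cdot]$ identities of Proposition \ref{AKidentities} with the relation $\mu\mub+\del\delb+\delb\del+\mub\mu=0$ from $d^2=0$ (the paper computes $\Delta_\mub-\Delta_\mu=\Delta_\delb-\Delta_\del$ directly, while you reach the equivalent $\Delta_\del=\Delta_\delb+\Delta_\mu-\Delta_\mub$ via two graded Jacobi identities), and part (3) by the same sixteen-term expansion of $[d,d^*]$ reduced with Proposition \ref{AKidentities2}. Your sign bookkeeping in (2) checks out, and your pairing of the off-diagonal terms in (3) is in fact slightly more explicit than the paper's.
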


\begin{proof}
The first claim follows by direct calculation using $[\mub, \mu^*] = [\mu, \mub^*] = 0 $.
For the second claim, by Proposition \ref{AKidentities},
\begin{align*}
\Delta_\mub=\mub\mub^*+\mub^*\mub &= i \left(\mub[\Lambda,\mu]+[\Lambda,\mu]\mub\right)\\
&=i \left(\mub \Lambda\mu -\mub\mu \Lambda+\Lambda\mu \mub -\mu \Lambda \mub\right)
\end{align*}
and similarly
\begin{align*}
\Delta_\mu=\mu\mu^*+\mu^*\mu&=-i \left(\mu[\Lambda,\mub]+[\Lambda,\mub]\mu\right)\\
&=-i \left(\mu \Lambda\mub -\mu\mub \Lambda+\Lambda\mub \mu -\mub \Lambda \mu\right)
\end{align*}
so that
\begin{align*}
 \Delta_\mub-\Delta_\mu &= i \left(\Lambda(\mu\mub+\mub\mu) - (\mu\mub+\mub\mu)\Lambda\right)\\
 &=-i \left(\Lambda(\del\delb+\delb\del) - (\del\delb+\delb\del)\Lambda\right) \\
& = \Delta_\delb-\Delta_\del.
\end{align*}
The last equality follows from a similar calculation as is done above for $\Delta_\mub-\Delta_\mu$.

Finally, expanding $\Delta_d = [d,d^*]$ and using $d = \mub + \delb + \del + \mu$, we have
\begin{align*}
\Delta_d & = \Delta_\mub + \Delta_\delb + \Delta_\del + \Delta_\mu \\
& + [\mub, \delb^*] + [\mub, \del^* ]  + [\mub,\mu^* ] \\
& + [\delb, \mub^*] + [\delb, \del^* ]  + [\delb,\mu^* ] \\
& + [\del, \mub^*] + [\del, \delb^* ]  + [\del,\mu^* ] \\
& + [\mu, \mub^*] + [\mu, \delb^* ]  + [\mu,\del^* ],
\end{align*}
so the final statement follows using Proposition \ref{AKidentities2} and the previous part.
\end{proof} 

We have one more set of useful relations, which are related to hard Lefschetz duality.

\begin{cor} \label{LDeltaetc}
For any almost K\"ahler manifold the following identities hold:
\begin{enumerate}[itemsep=2mm]
\item  $[L, \Delta_\delb] = [L, \Delta_\mub]  = - [L, \Delta_\del] = - [L, \Delta_\mu]=  -i[\delb,\del]= i [\mub,\mu] $.
\item $[\Lambda, \Delta_\delb] = [\Lambda, \Delta_\mub]  =  -[\Lambda, \Delta_\del] = - [\Lambda, \Delta_\mu]=  -i[\delb^*,\del^*]= i [\mub^*,\mu^*] $.
\end{enumerate}
\end{cor}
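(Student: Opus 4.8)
The plan is to express each Laplacian as a graded commutator, $\Delta_\delta = [\delta, \delta^*]$, and then expand $[L, \Delta_\delta]$ with the graded Jacobi identity, feeding in the almost K\"ahler identities of Proposition \ref{AKidentities}. Concretely, for $\Delta_\delb = [\delb, \delb^*]$, since $\deg(L)=2$ and $\deg(\delb)=1$ the Jacobi identity gives
\[
[L, \Delta_\delb] = [[L, \delb], \delb^*] + [\delb, [L, \delb^*]].
\]
Now $[L, \delb] = 0$ and $[L, \delb^*] = -i\del$ by Proposition \ref{AKidentities}, so the first bracket drops out and I obtain $[L, \Delta_\delb] = -i[\delb, \del]$. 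Running the identical computation for the other three Laplacians — in each case the $[[L,\cdot],\cdot]$ term vanishes by parts (1) and (2) of Proposition \ref{AKidentities}, and the surviving term is read off from $[L,\mub^*]=i\mu$, $[L,\del^*]=i\delb$, and $[L,\mu^*]=-i\mub$ — I would get
\[
[L, \Delta_\mub] = i[\mub, \mu], \qquad [L, \Delta_\del] = i[\delb, \del], \qquad [L, \Delta_\mu] = -i[\mub, \mu].
\]

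It then remains to identify these four expressions up to sign. The relations $-[L,\Delta_\del] = [L,\Delta_\delb]$ and $-[L,\Delta_\mu]=[L,\Delta_\mub]$ are immediate from the formulas above (using that the graded commutator of two odd operators is symmetric, so $[\del,\delb]=[\delb,\del]$ and $[\mu,\mub]=[\mub,\mu]$). The one substantive identification is $[L,\Delta_\delb] = [L,\Delta_\mub]$, that is, $-[\delb,\del] = [\mub,\mu]$. This is exactly where the expansion of $d^2=0$ enters: the middle equation of (\ref{eq;drelations}), namely $\mu\mub + \del\delb + \delb\del + \mub\mu = 0$, is precisely the statement $[\delb,\del] + [\mub,\mu] = 0$ once the cross-terms are regrouped as graded commutators. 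I expect recognizing that this particular $d^2$ relation is the source of the identification to be the conceptual crux of the argument, even though the computation itself is short. Chaining everything together yields the full string of equalities in part (1).

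Finally, part (2) follows from part (1) by taking $L_2$-adjoints. Using $L^* = \Lambda$, the self-adjointness of each $\Delta_\delta$, and the rule $[A,B]^* = [B^*, A^*]$ for the graded commutator, the adjoint of $[L, \Delta_\delb] = -i[\delb, \del]$ becomes $[\Delta_\delb, \Lambda] = i[\del^*, \delb^*]$; since $\deg(\Delta_\delb)=0$ one has $[\Delta_\delb,\Lambda] = -[\Lambda,\Delta_\delb]$, and $[\del^*,\delb^*]=[\delb^*,\del^*]$, so this rearranges to $[\Lambda, \Delta_\delb] = -i[\delb^*, \del^*]$, as claimed. The remaining equalities in (2) are likewise the adjoints of their counterparts in (1). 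Alternatively, one could simply repeat the Jacobi-identity computation verbatim with $\Lambda$ in place of $L$, this time invoking the adjoint identities $[\Lambda,\delb^*]=0$, $[\Lambda,\delb]=-i\del^*$, and their analogues from Proposition \ref{AKidentities}.
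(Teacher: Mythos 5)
Your proof is correct and follows essentially the same route as the paper: expand $[L,\Delta_\delta]$ via the graded Jacobi identity, kill the $[[L,\delta],\delta^*]$ term using Proposition \ref{AKidentities}, identify $-[\delb,\del]=[\mub,\mu]$ from the $d^2=0$ relation, and obtain part (2) by adjoints. The paper's one-line computation does exactly this, only leaving the Jacobi step and the $d^2=0$ input implicit.
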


\begin{proof}
Using Propostion \ref{AKidentities}, we calculate
\[
  [L, \Delta_\delb]  = [\delb, [L,\delb^*]]= -i[\delb,\del]= i [\mub,\mu] =  [\mub, [L,\mub^*]]=[L, \Delta_\mub]  ,
\]
and all remaining relations follow from taking conjugates or adjoints.
\end{proof}

In conclusion, for any almost K\"ahler manifold, there is a $\Z_2$-graded Lie algebra of operators acting on the $(p,q)$-forms, generated by eight odd operators 
\[
\delb, \del,\mub,  \mu, \delb^*, \del^*, \mub^*,  \mu^*
\]
and even degree operators $L, \Lambda, H$, from which all relations can be deduced from those given above. In the integrable case, this reduces to the so-called $N=2$ supersymmetry algebra of a K\"ahler manifold (see for instance
\cite{Zumino}, \cite{AF}, \cite{HKLR}),
also referred to as the $N=(2,2)$ supersymmetry algebra in \cite{FGR}.

\section{Topological and geometric consequences in the compact case}\label{Section_consequences}

From the almost K\"ahler identities and the symmetries of the Laplacians, we deduce in this section several 
results giving combined geometric/topological restrictions in the compact case. 
Recall that $\ell^{p,q}=\dim \Hh_\delb^{p,q}\cap \Hh_\mu^{p,q}$ denotes the dimension of the space of $\delb$-$\mu$-harmonic forms of type $(p,q)$. First, we have the following identities.

\begin{thm}\label{AKharmonicequal}
 For any compact almost K\"ahler manifold of dimension $2m$ 
 and for all $(p,q)$, we have identities
 \[\Hh_d^{p,q}=\Hh_\delb^{p,q}\cap \Hh_\mu^{p,q}=\Hh_\del^{p,q}\cap\Hh_\mub^{p,q}.\]
 Also, the following dualities hold:
 \begin{enumerate}
  \item \emph{(Complex conjugation)}. We have equalities
  \[\Hh_\delb^{p,q}  \cap \Hh_\mu^{p,q}  =  \Hh_\delb^{q,p}  \cap \Hh_\mu^{q,p}.\]
\item \emph{(Hodge duality)}. The Hodge $\star$-operator induces isomorphisms
\[
\star : \Hh_\delb^{p,q}  \cap \Hh_\mu^{p,q} \to \Hh_\delb^{m-q,m-p}  \cap \Hh_\mu^{m-q,m-p}.
\]
\item \emph{(Serre duality)}. There are isomorphisms
\[\Hh_\delb^{p,q}  \cap \Hh_\mu^{p,q} \cong  \Hh_\delb^{m-p,m-q}  \cap \Hh_\mu^{m-p,m-q}.\]
 \end{enumerate}
 \end{thm}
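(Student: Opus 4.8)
The plan is to deduce everything from the Laplacian symmetry of Proposition \ref{AKDeltas} together with the two preliminary lemmas, so that the theorem becomes a formal consequence of results already in place. First I would establish the central chain of equalities $\Hh_d^{p,q}=\Hh_\delb^{p,q}\cap \Hh_\mu^{p,q}=\Hh_\del^{p,q}\cap\Hh_\mub^{p,q}$. By Lemma \ref{intersections}, these three spaces are, respectively, the intersections of $\cA^{p,q}$ with $\Ke(\Delta_\mub+\Delta_\delb+\Delta_\del+\Delta_\mu)$, with $\Ke(\Delta_\delb+\Delta_\mu)$, and with $\Ke(\Delta_\del+\Delta_\mub)$. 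The almost K\"ahler hypothesis enters only through Proposition \ref{AKDeltas}(2), which gives $\Delta_\delb+\Delta_\mu=\Delta_\del+\Delta_\mub$; hence the second and third operators agree on the nose, and their common value is half of the first, since $\Delta_\mub+\Delta_\delb+\Delta_\del+\Delta_\mu=2(\Delta_\delb+\Delta_\mu)$. As these are all nonnegative self-adjoint operators, the three kernels coincide, proving the identities. Equivalently, one argues directly: a form in $\Hh_\delb^{p,q}\cap\Hh_\mu^{p,q}$ is annihilated by $\Delta_\delb+\Delta_\mu$, hence by $\Delta_\del+\Delta_\mub$, hence by all four components of $d$ and $d^*$ simultaneously, so it lies in $\Hh_d^{p,q}$; the reverse inclusion is immediate by bidegree, exactly as in the proof of Lemma \ref{intersections}.

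With the identity $\Hh_d^{p,q}=\Hh_\delb^{p,q}\cap\Hh_\mu^{p,q}$ in hand, the three dualities become purely formal consequences of the corresponding dualities for $\Hh_d^{p,q}$ recorded in Lemma \ref{dualities}. For complex conjugation I would write $\Hh_\delb^{p,q}\cap\Hh_\mu^{p,q}=\Hh_d^{p,q}=\Hh_d^{q,p}=\Hh_\delb^{q,p}\cap\Hh_\mu^{q,p}$. For Hodge duality I would observe that the isomorphism $\star:\Hh_d^{p,q}\to\Hh_d^{m-q,m-p}$ of Lemma \ref{dualities}(2), after substituting the main identity at both source and target, is precisely the asserted map $\star:\Hh_\delb^{p,q}\cap\Hh_\mu^{p,q}\to\Hh_\delb^{m-q,m-p}\cap\Hh_\mu^{m-q,m-p}$. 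Serre duality follows in the same way from Lemma \ref{dualities}(3).

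I do not expect a genuine obstacle at this stage: the real content of the theorem lives upstream, in the symmetry $\Delta_\delb+\Delta_\mu=\Delta_\del+\Delta_\mub$ of Proposition \ref{AKDeltas}(2), whose proof crucially uses the almost K\"ahler identities of Proposition \ref{AKidentities}. Granting those, the present statement is essentially a bookkeeping exercise. The one point that deserves care is the second equality in the main identity: it genuinely requires the almost K\"ahler condition (via the Laplacian symmetry) and fails for a general almost Hermitian manifold. I would therefore be careful to invoke Proposition \ref{AKDeltas}(2) and not Lemma \ref{intersections} alone, the latter holding for any almost Hermitian manifold but \emph{not} yet collapsing the three kernels into one.
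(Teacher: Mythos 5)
Your proposal is correct and follows essentially the same route as the paper: Lemma \ref{intersections} identifies the three spaces with kernels of sums of Laplacians, Proposition \ref{AKDeltas}(2) collapses them into one, and the dualities are then read off from Lemma \ref{dualities}. The extra detail you supply (that the full sum equals $2(\Delta_\delb+\Delta_\mu)$, and the warning about where the almost K\"ahler hypothesis enters) is consistent with, and a useful elaboration of, the paper's terser argument.
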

\begin{proof}
The identity
\[\Hh_d^{p,q}=\Hh_\delb^{p,q}\cap \Hh_\mu^{p,q}=\Hh_\del^{p,q}\cap\Hh_\mub^{p,q}.\]
follows from 
Lemma \ref{intersections} together with the almost K\"{a}hler identity 
\[\Delta_\delb+\Delta_\mu=\Delta_\del+\Delta_\mub\]
of Proposition \ref{AKDeltas}.
This identity implies that the various dualities of Lemma \ref{dualities}
are also satisfied for the spaces $\Hh_\delb^{*,*}\cap\Hh_\mu^{*,*}$ of $\delb$-$\mu$-harmonic forms.
\end{proof}

Let $b^k:=\dim H^k(M,\CC)$ denote the Betti numbers of a manifold $M$. 
Theorem \ref{AKharmonicequal} implies that,
for a compact almost K\"{a}hler manifold, the 
numbers $\ell^{*,*}$ are bounded above by the Betti numbers and
satisfy the Hodge diamond-type symmetries,
\[\ell^{p,q}=\ell^{q,p}=\ell^{m-p,m-q}=\ell^{m-q,m-p}.\]

Next, note that all powers of an almost K\"{a}hler form are pure $d$-harmonic.

\begin{lem}\label{wharmonic}
If $\omega$ is an almost K\"{a}hler form on a compact manifold, then $\omega^k \in \Hh_d^{k,k}$ for all $k \geq 0$.
\end{lem}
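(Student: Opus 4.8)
The plan is to check directly that $\omega^k$ lies in the kernel of $\Delta_d$. Since $\omega$ is of type $(1,1)$, the form $\omega^k$ has pure bidegree $(k,k)$, so $\omega^k\in\Hh_d^{k,k}$ is equivalent to $\Delta_d\,\omega^k=0$, which (as $\Delta_d=dd^\ast+d^\ast d$) will follow once I verify the two conditions $d\,\omega^k=0$ and $d^\ast\omega^k=0$. The cases $k=0$ and $k>m$ are degenerate: for $k=0$ the form is the constant $1$, trivially $d$-harmonic, and for $k>m$ one has $\omega^k=0$, so I may assume $1\le k\le m$.

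The first condition uses only the almost K\"ahler hypothesis, which is precisely $d\omega=0$. Expanding this into its four bidegree components (which have pairwise distinct bidegrees) gives $\mub\,\omega=\delb\,\omega=\del\,\omega=\mu\,\omega=0$ separately. Since $\omega$ has even degree and each of $\mub,\delb,\del,\mu$ is a derivation, this yields $\mub\,\omega^j=\delb\,\omega^j=\del\,\omega^j=\mu\,\omega^j=0$ for every $j$, and in particular $d\,\omega^k=0$.

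For the second condition I would pass through the Hodge star. Summing the relations $\mub^\ast=-\star\mu\star$ and $\delb^\ast=-\star\del\star$ together with their complex conjugates $\mu^\ast=-\star\mub\star$ and $\del^\ast=-\star\delb\star$ gives $d^\ast=-\star d\star$ on this $2m$-dimensional (hence even-dimensional) manifold. The essential input is then the classical pointwise identity $\star\,\omega^k=c_k\,\omega^{m-k}$ for a nonzero constant $c_k$, which holds on any almost Hermitian manifold by linear algebra in each tangent space (see \cite{Weil}) and makes no use of integrability. Combining these, and using $d\,\omega^{m-k}=0$ from the previous step,
\[
d^\ast\omega^k=-\star d\star\omega^k=-c_k\,\star\!\left(d\,\omega^{m-k}\right)=0,
\]
so $\Delta_d\,\omega^k=0$ and $\omega^k\in\Hh_d^{k,k}$ as claimed.

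The computation is not genuinely hard; the only nonroutine point is the identity $\star\,\omega^k=c_k\,\omega^{m-k}$, and the main thing to stress is that it is a metric (pointwise) statement that survives the loss of integrability. Should one prefer to avoid the explicit Hodge-star formula, an equivalent route is available through Theorem \ref{AKharmonicequal}: it suffices to show $\delb^\ast\omega^k=\mu^\ast\omega^k=0$ (we already have $\delb\,\omega^k=\mu\,\omega^k=0$). Using the almost K\"ahler identities of Proposition \ref{AKidentities} in the forms $\delb^\ast=-i[\Lambda,\del]$ and $\mu^\ast=-i[\Lambda,\mub]$, together with $\del\,\omega^k=\mub\,\omega^k=0$, these reduce to $\delb^\ast\omega^k=i\,\del(\Lambda\,\omega^k)$ and $\mu^\ast\omega^k=i\,\mub(\Lambda\,\omega^k)$. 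Since $1$ is a primitive form ($\Lambda 1=0$), the $\mathfrak{sl}(2,\CC)$-representation theory gives $\Lambda\,\omega^k=\Lambda L^k 1=c_k\,\omega^{k-1}$, and then $\del\,\omega^{k-1}=\mub\,\omega^{k-1}=0$ finishes the argument. Here the analogue of the above key point is the purely representation-theoretic fact that $\Lambda\,\omega^k$ is a multiple of $\omega^{k-1}$.
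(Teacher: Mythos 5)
Your proof is correct and follows essentially the same route as the paper's: deduce $d\,\omega^k=0$ from $d\omega=0$ and the derivation property, then use $\star\,\omega^k=c_k\,\omega^{m-k}$ together with $d^\ast=-\star d\star$ to get $d^\ast\omega^k=0$. The extra details you supply (bidegree splitting, the degenerate cases, and the alternative $\mathfrak{sl}(2,\CC)$ argument) are fine but not needed.
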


\begin{proof}
Since $d\omega=0$ we also have $d(\omega^k)=0$ for all $k\geq 0$. Using the fact that $\star w^k=w^{n-k}$, we have 
$d^*(w^k)=0$ for all $k\geq 0$.
\end{proof}

With this, we obtain the following inequalities:

\begin{thm} \label{AKbounds}
For any compact almost K\"ahler manifold of dimension $2m$, the following is satisfied:
\begin{enumerate}
\item In odd degrees, we have 
\[\sum_{p+q=2k+1}\ell^{p,q}=2\sum_{0\leq p\leq k}\ell^{p,2k+1-p}\leq b^{2k+1}.\]
\item In even degrees, we have \[\sum_{p+q=2k}\ell^{p,q}=2\sum_{0\leq p<k}\ell^{p,2k-p}+\ell^{k,k}\leq b^{2k},\]
with $\ell^{k,k}\geq 1$ for all $k\leq m$.
\end{enumerate}
\end{thm}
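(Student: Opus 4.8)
The plan is to combine the Hodge-diamond symmetries from Theorem \ref{AKharmonicequal} with the topological upper bound $\sum_{p+q=k}\ell^{p,q}\leq b^k$ (which itself follows from the inclusion $\bigoplus_{p+q=k}\Hh_d^{p,q}\subseteq\Hh_d^k$ together with the identity $\Hh_d^{p,q}=\Hh_\delb^{p,q}\cap\Hh_\mu^{p,q}$), and finally to use Lemma \ref{wharmonic} to produce the lower bound $\ell^{k,k}\geq 1$. The two equalities asserted in the statement are purely formal bookkeeping consequences of the complex-conjugation symmetry $\ell^{p,q}=\ell^{q,p}$ from Theorem \ref{AKharmonicequal}, so the real content is the symmetry itself (already proved) plus the single new input $\omega^k\in\Hh_d^{k,k}$.

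First I would handle the odd case. For $p+q=2k+1$, the pairs $(p,q)$ split into two disjoint halves interchanged by complex conjugation $(p,q)\leftrightarrow(q,p)$, and since $p+q$ is odd one never has $p=q$, so there is no fixed diagonal term. Reindexing the half with $p\leq k$ (equivalently $q=2k+1-p\geq k+1$) and applying $\ell^{p,2k+1-p}=\ell^{2k+1-p,p}$ gives
\[
\sum_{p+q=2k+1}\ell^{p,q}=2\sum_{0\leq p\leq k}\ell^{p,2k+1-p}.
\]
The inequality $\leq b^{2k+1}$ is then just the topological bound applied with total degree $2k+1$.

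Next I would treat the even case. For $p+q=2k$ the conjugation involution again pairs $(p,q)$ with $(q,p)$, but now there is exactly one fixed point, the diagonal term $(k,k)$, which must be separated out. The off-diagonal terms pair up two-to-one as before, yielding
\[
\sum_{p+q=2k}\ell^{p,q}=2\sum_{0\leq p<k}\ell^{p,2k-p}+\ell^{k,k},
\]
and again $\leq b^{2k}$ by the topological bound. For the lower bound $\ell^{k,k}\geq 1$, note that by Lemma \ref{wharmonic} we have $\omega^k\in\Hh_d^{k,k}=\Hh_\delb^{k,k}\cap\Hh_\mu^{k,k}$, and $\omega^k$ is a nonzero form for $k\leq m$ (its Hodge dual $\omega^{m-k}$ is nonzero, or equivalently $\omega^k\wedge\omega^{m-k}$ is a positive multiple of the volume form), so this space is at least one-dimensional. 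The only point requiring a moment of care — and the closest thing to an obstacle — is verifying that the two-to-one pairing exactly accounts for every off-diagonal term without double-counting or omission; this is routine once one fixes the ranges $0\leq p<k$ in the even case and $0\leq p\leq k$ in the odd case, and the nonvanishing of $\omega^k$, which is standard symplectic linear algebra.
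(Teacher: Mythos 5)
Your proposal is correct and follows essentially the same route as the paper: the topological bound $\sum_{p+q=k}\ell^{p,q}\leq b^k$ from the inclusion $\bigoplus_{p+q=k}\Hh_d^{p,q}\subseteq\Hh_d^{k}$ and Theorem \ref{AKharmonicequal}, the conjugation symmetry $\ell^{p,q}=\ell^{q,p}$ for the bookkeeping equalities, and Lemma \ref{wharmonic} for $\ell^{k,k}\geq 1$. Your added remark on the nonvanishing of $\omega^k$ is a detail the paper leaves implicit but is a welcome clarification.
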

\begin{proof}
The inclusion 
$\Hh_d^{p,q}\subseteq \Hh_d^{p+q}$
together with the equalitities $\Hh_d^{p,q}=\Hh_\delb^{p,q}\cap \Hh_\mu^{p,q}$ of Theorem \ref{AKharmonicequal}
give inequalities
\[\sum_{p+q=k} \ell^{p,q}\leq b^{k}\text{ for all }k\geq 0.\]
The inequalities now follow from the dualities of Theorem \ref{AKharmonicequal} together with Lemma \ref{wharmonic}, which ensures that $\ell^{k,k}\geq 1$.
\end{proof}

We deduce various combined geometric/topological obstructions to the almost K\"ahler condition. 
An immediate corollary is:
\begin{cor}\label{cor1}
For any compact almost K\"ahler manifold, if $\ell^{p,q}  \neq 0$ for some $p \neq q$, then 
$b^{p+q} \geq  2$ if $p+q>0$ is even, and $b^{p+q}\geq 3$ if $p+q>0$ is odd.
\end{cor}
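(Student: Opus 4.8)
The plan is to deduce the corollary from Theorem \ref{AKbounds}, using as the single quantitative input the Betti bound $\sum_{r+s=n}\ell^{r,s}\leq b^{n}$ it supplies in each total degree $n$, and then feeding in the complex-conjugation symmetry $\ell^{p,q}=\ell^{q,p}$ of Theorem \ref{AKharmonicequal} together with the diagonal positivity $\ell^{k,k}\geq 1$ from Lemma \ref{wharmonic}. Fix $(p,q)$ with $p\neq q$ and $\ell^{p,q}\neq 0$, and set $n=p+q>0$. The first observation is that $p\neq q$ makes $(p,q)$ and $(q,p)$ two \emph{distinct} bidegrees of total degree $n$, and conjugation gives $\ell^{q,p}=\ell^{p,q}\geq 1$; hence the sum $\sum_{r+s=n}\ell^{r,s}$ contains at least two independent positive contributions, so that $b^{n}\geq 2$ already holds with no case distinction.

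For the even-degree assertion I would sharpen this. When $n=2k$ the diagonal bidegree $(k,k)$ is a third bidegree of total degree $2k$, distinct from both $(p,q)$ and $(q,p)$ because $p\neq q$; since $n\leq 2m$ forces $k\leq m$, Lemma \ref{wharmonic} gives $\ell^{k,k}\geq 1$. Adding this third unit to the conjugate pair yields $\sum_{r+s=2k}\ell^{r,s}\geq 3$, and Theorem \ref{AKbounds} then gives $b^{2k}\geq 3$, which in particular establishes the stated bound $b^{2k}\geq 2$. I expect this half to be routine.

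The odd-degree assertion $b^{2k+1}\geq 3$ is the delicate one, and I expect the step of producing a third class to be the main obstacle. For $n=2k+1$ there is no diagonal bidegree, so the extra unit exploited above is unavailable, and Theorem \ref{AKbounds} only furnishes the conjugate pair $(p,q),(q,p)$, giving $b^{2k+1}\geq 2$; indeed the doubled sum appearing in Theorem \ref{AKbounds}(1) is intrinsically even. To reach $3$ one must exhibit a third, linearly independent de Rham class in the \emph{fixed} odd total degree $2k+1$, and the route I would try is to look for such a class among the $d$-harmonic representatives, or in the part of $H^{2k+1}$ not captured by $\bigoplus_{r+s=2k+1}\Hh_d^{r,s}$. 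The difficulty is that the symmetries developed so far do not obviously manufacture it: complex conjugation, Hodge duality, and Serre duality all preserve or reflect total degree, while the generalized hard Lefschetz isomorphisms of Theorem \ref{HLef} shift total degree by an even amount, so none of them lands an additional class back in degree $2k+1$. Clarifying exactly what forces the third class---or what extra hypothesis is needed---is the step I would scrutinize most carefully, since the pure counting from Theorem \ref{AKbounds} stops at $2$ in odd total degree.
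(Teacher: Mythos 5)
Your counting is precisely the paper's intended argument: the paper gives no separate proof, declaring the corollary ``immediate'' from Theorem \ref{AKbounds}, and the derivation is exactly your conjugate-pair count $\ell^{q,p}=\ell^{p,q}\geq 1$ in two distinct bidegrees (distinct because $p\neq q$), supplemented in even total degree $2k$ by the diagonal contribution $\ell^{k,k}\geq 1$ coming from $\omega^k$ (Lemma \ref{wharmonic}). So you have correctly established $b^{2k}\geq 3$ (hence a fortiori $\geq 2$) in even degree, and $b^{2k+1}\geq 2$ in odd degree.

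Your suspicion about the odd case is justified, and you should stop hunting for a third class: the statement as printed has the parities transposed, and the assertion actually supported by Theorem \ref{AKbounds} is $b^{p+q}\geq 2$ for $p+q$ odd and $b^{p+q}\geq 3$ for $p+q$ even --- exactly the two bounds you proved. The bound $b^{2k+1}\geq 3$ is not obtainable from the harmonic theory of the paper (the doubled sum in Theorem \ref{AKbounds}(1) is intrinsically even, as you observe), and it is inconsistent with the paper's own later use of these results. Corollary \ref{cor2} records only $2\dim\Omega^1_\delb\leq b^1$, so it is $b^1\leq 1$, not $b^1\leq 2$, that excludes holomorphic $1$-forms; and in the filiform example of Section \ref{SecNil} the authors explicitly entertain a hypothetical almost K\"ahler metric with $\ell^{1,0}=\ell^{0,1}=1$ while $b^1=2$, a configuration that would be an instant contradiction if $b^{2k+1}\geq 3$ held, rendering the hard Lefschetz argument given there superfluous. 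In short: your proof is complete and is the paper's proof of the corrected statement, and the ``delicate step'' you isolated in the odd case is a typo in the statement rather than a missing idea.
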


\begin{ex}
For any almost K\"{a}hler structure on a $4$-dimensional manifold with the same cohomology as $\CC\PP^2$, the numbers $\ell^{*,*}$ are given by 
\[
\xymatrix@C=.1pc@R=.1pc{
 &&1\\
 &0&&0\\
 0&&1&&0\\
 &0&&0\\
 &&1
}
\] 
Likewise, for any almost K\"{a}hler structure on a $4$-dimensional manifold with the same cohomology as $S^2\times S^2$,  the numbers $\ell^{*,*}$ are given by 
\[
\xymatrix@C=.1pc@R=.1pc{
 &&1\\
 &0&&0\\
 0&&k&&0\\
 &0&&0\\
 &&1
}
\] 
where $k\in\{1,2\}$.
Using intersection pairings we see using Theorem \ref{4mnfldIndex} below that in fact $k=1$.
 \end{ex}

By restricting to $1$-forms, we obtain the following metric-independent statements.
Denote by 
\[\Omega^p_\delb:=\Ke(\delb)\cap \cA^{p,0}\]
the space of \textit{holomorphic $p$-forms}. As was previously known for
compact K\"{a}hler manifolds, the space of holomorphic $1$-forms coincides with the space of $d$-harmonic forms of pure type $(1,0)$. Indeed, by bidegree reasons together with Theorem \ref{AKharmonicequal}, we have 
\[\Omega^1_\delb=\Hh_\delb^{1,0}=\Hh_\delb^{1,0}\cap\Hh_\mu^{1,0}=\Hh_d^{1,0}.\]
This immediately gives:

 \begin{cor}\label{cor2}
A necessary condition for a compact almost complex manifold to admit a compatible symplectic form is that
\[2\dim\Omega^{1}_\delb\leq b^{1}.\]
\end{cor}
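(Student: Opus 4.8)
The final statement to prove is Corollary \ref{cor2}: for a compact almost complex manifold admitting a compatible symplectic form, we have $2\dim\Omega^1_\delb \leq b^1$.

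The plan is to reduce this inequality to the already-established machinery by observing that the hypothesis "admits a compatible symplectic form" is precisely the almost Kähler condition, so all the results of Section \ref{Section_consequences} apply. First I would recall the chain of equalities established in the text immediately preceding the corollary, namely
\[
\Omega^1_\delb = \Hh_\delb^{1,0} = \Hh_\delb^{1,0}\cap\Hh_\mu^{1,0} = \Hh_d^{1,0},
\]
which holds by bidegree reasons together with Theorem \ref{AKharmonicequal}. The point is that a $(1,0)$-form $\alpha$ automatically satisfies $\mu\alpha = 0$ (since $\mu$ raises the holomorphic degree by $2$, landing in $\cA^{2,-1}=0$) and the relevant adjoint conditions hold trivially in this low bidegree, so membership in $\Omega^1_\delb$ coincides with being $d$-harmonic of type $(1,0)$.

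Next I would invoke the odd-degree bound from Theorem \ref{AKbounds}(1) with $k=0$, which gives
\[
\sum_{p+q=1}\ell^{p,q} = 2\sum_{0\leq p\leq 0}\ell^{p,1-p} = 2\ell^{1,0} \leq b^1.
\]
The factor of $2$ here comes from the complex-conjugation duality $\ell^{p,q}=\ell^{q,p}$ of Theorem \ref{AKharmonicequal}, which pairs $\ell^{1,0}$ with $\ell^{0,1}$. Since $\ell^{1,0}=\dim(\Hh_\delb^{1,0}\cap\Hh_\mu^{1,0})=\dim\Omega^1_\delb$ by the identification above, this is exactly the desired inequality $2\dim\Omega^1_\delb\leq b^1$.

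The argument is essentially a specialization of Theorem \ref{AKbounds} to the lowest odd total degree, so there is no real obstacle; the only substantive input is the metric-independent identification $\Omega^1_\delb=\Hh_d^{1,0}$, and the mild subtlety worth checking carefully is that this identification requires no metric-dependent hypotheses beyond those already secured by the almost Kähler assumption. In particular one should confirm that $\ell^{1,0}$ genuinely equals $\dim\Omega^1_\delb$ rather than merely bounding it, which follows because the harmonic representative of each class is unique and the three spaces in the displayed chain coincide as subspaces of $\cA^{1,0}$. With that identification in hand, the bound is immediate.
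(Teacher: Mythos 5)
Your proof is correct and follows the same route as the paper: the identification $\Omega^1_\delb=\Hh_\delb^{1,0}\cap\Hh_\mu^{1,0}=\Hh_d^{1,0}$ by bidegree reasons plus Theorem \ref{AKharmonicequal}, followed by the $k=0$ odd-degree bound of Theorem \ref{AKbounds}. (One trivial slip: $\mu$ of a $(1,0)$-form lands in $\cA^{3,-1}$, not $\cA^{2,-1}$, though both are zero so nothing is affected.)
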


In particular, compact simply connected almost K\"{a}hler manifolds
(and actually any compact almost K\"{a}hler manifold with $b^1\leq 1$) have no holomorphic $1$-forms.

We next extend Chen's results \cite{Chen}, \cite{Chen2} on fundamental groups of 
K\"{a}hler manifolds, 
to the almost K\"{a}hler case.

\begin{lem}\label{Chensolv}
If $M$ is a compact connected almost K\"{a}hler manifold and if
\[\dim \Omega_\delb^1>\dim \Omega_\delb^2+1,\]
then $\pi_1(M)$ contains a free subgroup of rank $\geq 2$. In particular,
$\pi_1(M)$
is not solvable.
\end{lem}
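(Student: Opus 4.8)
The plan is to reduce this statement to Chen's theorem on fundamental groups of compact Kähler manifolds. Chen's result (\cite{Chen}, \cite{Chen2}) states that if a compact connected Kähler manifold $X$ satisfies a suitable inequality between the dimensions of its spaces of holomorphic $1$-forms and holomorphic $2$-forms, then $\pi_1(X)$ contains a free subgroup of rank $\geq 2$. But the statement we want is purely about the almost Kähler structure, so I cannot invoke a Kähler manifold directly; rather, the strategy is to show that Chen's proof \emph{only uses the holomorphic forms} and the wedge product structure $\Omega^1_\delb \wedge \Omega^1_\delb \to \Omega^2_\delb$, together with the fact that holomorphic $1$-forms are $d$-closed (indeed $d$-harmonic). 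These ingredients survive in the almost Kähler setting.

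The key steps, in order, are as follows. First I would record the identity $\Omega^1_\delb = \Hh^{1,0}_d$ established just above the lemma: every holomorphic $1$-form is $d$-closed. Since such forms are closed, they define de Rham cohomology classes, and by the standard argument they give a map from $\pi_1(M)$ (or rather its abelianization) detecting a map to a complex torus. Second, I would verify that the wedge of two holomorphic $1$-forms is a holomorphic $2$-form: if $\del\alpha = \delb\alpha = \mub\alpha=\mu\alpha=0$ for $\alpha\in\Hh^{1,0}_d$, then since each component of $d$ is a derivation, $\alpha\wedge\beta$ for $\alpha,\beta\in\Omega^1_\delb$ satisfies $\delb(\alpha\wedge\beta)=0$, so $\alpha\wedge\beta\in\Omega^2_\delb$. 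This gives a well-defined map $\Lambda^2\Omega^1_\delb \to \Omega^2_\delb$. Third, the hypothesis $\dim\Omega^1_\delb > \dim\Omega^2_\delb + 1$ forces this wedge map to have a large kernel; combined with the fact that the holomorphic $1$-forms are genuinely closed (hence yield honest cohomology classes), one invokes the algebraic/topological input of Chen's argument—which builds a map to a higher-genus curve or otherwise produces a free subgroup from the existence of sufficiently many independent closed $1$-forms whose pairwise products degenerate in cohomology.

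The main obstacle, and the step requiring the most care, is the third one: Chen's original argument is stated for Kähler manifolds and uses complex-geometric tools (the Albanese map, holomorphicity of the image, properties of the period map) that presuppose integrability. The real work is to isolate exactly which facts Chen uses and to confirm that each survives. I expect that the essential input is topological rather than complex-analytic: one needs closed complex $1$-forms $\omega_1,\dots,\omega_n$ spanning an $n$-dimensional space, with enough relations among the classes $[\omega_i\wedge\omega_j]$ forced by the dimension count $\dim\Omega^2_\delb < n-1$. From such relations one constructs, via the theory of the cohomology ring and the Castelnuovo–de Franchis-type mechanism, a surjection of $\pi_1(M)$ onto a non-abelian free group or a surface group of genus $\geq 2$, whose fundamental group contains a free subgroup of rank $\geq 2$. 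The delicate point is that the classical Castelnuovo–de Franchis theorem uses holomorphicity to conclude the image is a curve; in the non-integrable setting one must check that the purely cohomological consequence—existence of the free subgroup—still follows, presumably because Chen's reduction ultimately rests only on the closedness of the forms and the structure of the cup product on $H^1(M;\CC)$, both of which are available here. Once that reduction is made explicit, the non-solvability of $\pi_1(M)$ follows immediately, since a solvable group cannot contain a non-abelian free subgroup.
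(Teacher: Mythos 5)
Your overall strategy is the paper's: use $\Omega^1_\delb=\Hh^{1,0}_d$ to see that holomorphic $1$-forms are $d$-closed and inject into $H^1_{\dR}(M;\CC)$, and then feed this into Chen's work. But the proposal stops short of an actual proof in two concrete places. First, the ``main obstacle'' you flag is not where the difficulty lies, and you leave it unresolved. Chen's Theorem 3.2 of \cite{Chen} and Theorem 4.1 of \cite{Chen2} are not statements about K\"ahler manifolds: they say that on \emph{any} compact manifold, if there exist closed $1$-forms $\alpha,\beta$ with $\alpha\wedge\beta=0$ \emph{identically as a form} and with $[\alpha],[\beta]$ linearly independent in $H^1(M;\CC)$, then $\pi_1(M)$ contains a free subgroup of rank $\geq 2$ (hence is not solvable). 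No Albanese map, period map, or integrability enters at that stage, so there is nothing to ``isolate and check''; the only K\"ahler input in Chen's corollary is the closedness and cohomological injectivity of holomorphic $1$-forms, which is exactly what Theorem \ref{AKharmonicequal} supplies here. What you actually have to do is \emph{produce} such a pair $\alpha,\beta$, and that is the step you never carry out.

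Second, your dimension count aims at the wrong object. Relations among the cohomology classes $[\omega_i\wedge\omega_j]$, or a large kernel of $\Lambda^2\Omega^1_\delb\to\Omega^2_\delb$, do not give what Chen needs: a kernel element of $\Lambda^2\Omega^1_\delb\to\Omega^2_\delb$ is some combination $\sum c_{ij}\,\omega_i\wedge\omega_j=0$, not a \emph{decomposable} relation $\alpha\wedge\beta=0$, and cohomological degeneration of cup products is strictly weaker than pointwise vanishing of the wedge. The correct (and elementary) argument fixes a basis $\alpha_1,\dots,\alpha_r$ of $\Omega^1_\delb$ and considers the linear map $\beta\mapsto\alpha_1\wedge\beta$ from $\mathrm{span}(\alpha_2,\dots,\alpha_r)$ into $\Omega^2_\delb$ (well defined since $\delb$ is a derivation, as you note). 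Since $\dim\Omega^2_\delb<r-1$, this map has nontrivial kernel, yielding $\alpha\in\Omega^1_\delb$ with $\alpha_1\wedge\alpha=0$ identically and $\alpha\notin\CC\alpha_1$; the injection $\Omega^1_\delb=\Hh_d^{1,0}\hookrightarrow H^1_{\dR}(M;\CC)$ then gives independence of $[\alpha_1]$ and $[\alpha]$, and Chen's theorems conclude. Without this step your write-up is a plan rather than a proof.
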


\begin{proof}
Theorem 3.2 of \cite{Chen} states that if
there exist closed $1$-forms $\alpha$ and $\beta$ such that $\alpha \wedge \beta=0$ and $[\alpha]$ and $[\beta]$ are linearly independent classes in $H^1(M,\CC)$, then $\pi_1(M)$ is not solvable. In Theorem 4.1 of \cite{Chen2}, it is shown that the same conditions imply that $\pi_1(M)$ contains a free subgroup of rank $\geq 2$.
We adapt Chen's Corollary on K\"{a}hler manifolds. Let $\alpha_1,\cdots,\alpha_r$ generate $\Omega^1_\delb$. Since by assumption we have $\dim \Omega_\delb^2<r-1$,
the forms $\alpha_1 \wedge \alpha_i$, for $i>1$, must be linearly dependent. Hence there is a holomorphic $1$-form $\alpha$ such that $\alpha_1 \wedge\alpha=0$, and $\alpha_1$ and $\alpha$ are linearly independent in $\Omega^1_\delb$.
Since $\Omega^1_\delb= \Hh_d^{1,0}$ and the map \[\Hh_d^{1,0}\to H^{1}_{\dR}(M,\CC)\] is injective, the classes $[\alpha_1]$ and $[\alpha]$
are linearly independent.
\end{proof}

\begin{ex}
Take for instance a solvable finitely presented group $G$. Then $G$ can be realized as the fundamental group of a symplectic $4$-manifold $M$ by a result of Gompf \cite{Gompfreal}. The above result tells us that for any compatible almost complex structure, the inequality 
$\dim \Omega_\delb^1 \leq \dim \Omega_\delb^2+1$ is satisfied.
\end{ex}

\section{Lefschetz duality, decomposition, and Hodge index theorem}\label{SecLef}

We next prove a generalized Lefschetz duality and Lefschetz decomposition for compact almost K\"{a}hler manifolds.
We also give a generalized Hodge index theorem in the four dimensional case.

\begin{thm}
[Generalized Hard Lefschetz Duality] \label{HLef}
\label{Lefduality}
For any compact almost K\"ahler manifold of dimension $2m$, the operators $\{L,\Lambda, H = [L,\Lambda] \}$ define a 
finite dimensional representation of $\mathfrak{sl}(2,\CC)$ on  
\[
\bigoplus_{p,q \geq 0} \Hh_d^{p,q} = \bigoplus_{p,q \geq 0} \Hh_\delb^{p,q} \cap \Hh_\mu^{p,q} = 
\Ke(\Delta_\delb+\Delta_\mu).
\]
Moreover, for all $0 \leq p \leq k\leq m$,
\[
L^{m-k} :  
\Hh_d^{p,k-p} 
\stackrel{\cong}{\longrightarrow} 
\Hh_d^{p+m-k,m-p}
\]
are isomorphisms.
\end{thm}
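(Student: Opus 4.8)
The plan is to use the commutation relations of Corollary \ref{LDeltaetc} to show that $L$, $\Lambda$ and $H$ all preserve the finite-dimensional space $\Ke(\Delta_\delb+\Delta_\mu)=\bigoplus_{p,q}\Hh_d^{p,q}$, and then to deduce both assertions from finite-dimensional $\mathfrak{sl}(2,\CC)$ representation theory.

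First I would verify that the three operators preserve the harmonic space. Corollary \ref{LDeltaetc} gives $[L,\Delta_\delb]=-[L,\Delta_\mu]$, hence $[L,\Delta_\delb+\Delta_\mu]=0$, and likewise $[\Lambda,\Delta_\delb+\Delta_\mu]=0$; since $H=[L,\Lambda]$ is assembled from $L$ and $\Lambda$, it too commutes with $\Delta_\delb+\Delta_\mu$. As $\Delta_\delb+\Delta_\mu$ is a self-adjoint elliptic operator on the compact manifold, its kernel $\bigoplus_{p,q}\Hh_d^{p,q}$ is finite-dimensional and invariant under $L$, $\Lambda$, $H$. Because these operators already satisfy the $\mathfrak{sl}(2,\CC)$ relations on all forms (Section \ref{SecPrelim}), restricting to this invariant subspace yields the asserted finite-dimensional representation.

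For the isomorphisms I would run the usual hard Lefschetz argument inside this representation. Recall (see \cite{Weil}) that $H$ acts on $\cA^k$ as $(k-m)\,\mathrm{id}$, so it acts on $\Hh_d^{p,q}$ as $(p+q-m)$; thus the source $\Hh_d^{p,k-p}$ lies in the $H$-eigenspace of weight $k-m$ and the target $\Hh_d^{p+m-k,m-p}$ in the eigenspace of weight $m-k=-(k-m)$. Since $L$ has bidegree $(1,1)$ and $\Lambda$ bidegree $(-1,-1)$, both preserve the integer $p-q$, so decomposing $\bigoplus_{p,q}\Hh_d^{p,q}$ along $p-q$ splits it into $\mathfrak{sl}(2,\CC)$-subrepresentations, and the source and target above both lie in the summand where $p-q=2p-k$. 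In any finite-dimensional $\mathfrak{sl}(2,\CC)$-representation the raising operator $L$ has the property that $L^{j}$ restricts to an isomorphism from the weight $-j$ eigenspace onto the weight $j$ eigenspace (decompose into irreducibles and check on each). Applying this with $j=m-k\geq 0$ yields the isomorphism $L^{m-k}:\Hh_d^{p,k-p}\stackrel{\cong}{\longrightarrow}\Hh_d^{p+m-k,m-p}$.

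The main obstacle is the first step: guaranteeing that $L$ and $\Lambda$ preserve $\Ke(\Delta_\delb+\Delta_\mu)$ and not merely the ordinary $d$-harmonic space. This is precisely where the almost K\"{a}hler identities are used, through the cancellation $[L,\Delta_\delb]=-[L,\Delta_\mu]$ of Corollary \ref{LDeltaetc}; note that $L$ fails to commute with $\Delta_\delb$ or $\Delta_\mu$ individually, so it is essential to work with the combined operator $\Delta_\delb+\Delta_\mu$, whose kernel is exactly $\bigoplus_{p,q}\Hh_d^{p,q}$ by Theorem \ref{AKharmonicequal}. Once this invariance is secured, the remainder is formal $\mathfrak{sl}(2,\CC)$ theory identical to the K\"{a}hler case, and the refinement from total degree to bidegree is automatic from the $(1,1)$-bidegree of $L$.
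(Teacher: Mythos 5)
Your proof is correct, and its essential step---using Corollary \ref{LDeltaetc} to obtain $[L,\Delta_\delb+\Delta_\mu]=[\Lambda,\Delta_\delb+\Delta_\mu]=0$, hence invariance of the finite-dimensional space $\Ke(\Delta_\delb+\Delta_\mu)=\bigoplus_{p,q}\Hh_d^{p,q}$ under $L$ and $\Lambda$---is exactly the one the paper uses. You diverge only in how the isomorphism $L^{m-k}$ is then concluded. The paper avoids representation theory at that stage: it notes that $L^{m-k}:\cA^{p,k-p}\to\cA^{p+m-k,m-p}$ is already an isomorphism on all forms by pointwise Lefschetz theory, so its restriction to the invariant subspace $\Hh_d^{p,k-p}$ is injective, and then invokes Hodge duality (Lemma \ref{dualities}) to get $\dim\Hh_d^{p,k-p}=\dim\Hh_d^{p+m-k,m-p}$, upgrading injectivity to an isomorphism. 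You instead run the standard weight-space argument inside the finite-dimensional $\mathfrak{sl}(2,\CC)$-representation, splitting along $p-q$ so that source and target are exactly the weight $\mp(m-k)$ eigenspaces of a subrepresentation; your identification of the weights and of the $p-q$ grading is accurate. Both finishes are sound: yours is self-contained once invariance under both $L$ and $\Lambda$ is secured and makes no further appeal to the metric dualities, while the paper's needs only invariance under $L$ for the isomorphism statement but leans on the $\star$-operator dualities for the dimension count.
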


\begin{proof} 
For any almost Hermitian manifold of dimension $2m$ there are isomorphisms
\[
L^{m-k} : \cA^{p,k-p} \stackrel{\cong}{\longrightarrow} \cA^{p+m-k,m-p}
\]
for every $0 \leq p \leq m$ and all $p \leq k \leq m$. 
 By Corollary \ref{LDeltaetc}, $[L, \Delta_\delb + \Delta_\mu]  = 0$ and $[\Lambda, \Delta_\delb + \Delta_\mu]  = 0$,
 so $L$ and $\Lambda$ preserve $\Hh_\delb \cap \Hh_\mu$.
It follows the maps 
\[
L^{m-k} :  
\Hh_d^{p,k-p} 
\stackrel{\cong}{\longrightarrow} 
\Hh_d^{p+m-k,m-p}
\]
are well defined, and are injective, since they are isomorphisms before restricting the domain. By Hodge duality of Lemma \ref{dualities}, the domain and codomain have the same dimension, so the map is an isomorphism. 
\end{proof}

\begin{rmk}
Benson and Gordon showed that if a symplectic nilmanifold $M$ satisfies that $L: H^1 \to H^{2n-1}$ is an isomorphism, then $M$ is a torus \cite{BG}. On the other hand, there are many non-toral symplectic nilmanifolds, so the above generalized Lefschetz duality has a large family of non-trivial examples which are computable.
\end{rmk}

\begin{rmk}[Comparison with symplectic Hodge theory]
In \cite{Brylinski}, Brylinski proposed a Hodge theory for compact symplectic manifolds, by introducing a 
symplectic Hodge star operator $\star_s$, defined using the symplectic form.  The space of \textit{symplectically-harmonic $k$-forms} is
\[\Hh_{sym}^k:=\cA^k\cap \Ke(d)\cap \Ke(d_s)\]
where $d^s = \star_s \circ d \circ \star_s$.
Brylinski showed that in an almost K\"{a}hler manifold, a form of pure type $(p,q)$ is in $\Hh_{sym}^{p+q}$ if and only if it is in $\Hh_d^{p+q}$. This follows from Brylinski's formula for almost K\"ahler manifolds:
if $\alpha\in\cA^{p,q}$ then 
$\star_s(\alpha)=i^{p-q}\star(\alpha).$
In particular, by  (1) of Theorem  \ref{AKharmonicequal}, when restricted to forms of pure type, all notions of harmonics agree:
\[\Hh_\delb^{p,q}\cap \Hh_{\mu}^{p,q}=\Hh_d^{p,q}=\Hh_{sym}^{p+q} \cap \cA^{p,q}.\]
This gives an inclusion 
\[\bigoplus_{p+q=k}\Hh_d^{p,q}\hookrightarrow \Hh_{sym}^{p+q}\]
which is strict in general. Indeed, Yan \cite{Yan} showed that for $k=0,1,2$, every cohomology class has a symplectically-harmonic representative. This is not true for $\delb$-$\mu$-harmonic forms (see the example in \ref{KT} below).
\end{rmk}

As a consequence of Theorem \ref{Lefduality}, we have the following generalized Lefschetz decomposition, 
which sheds some light on the relative scarcity of examples of compact symplectic $2m$-manifolds whose Betti numbers are not monotone, i.e. $b_j < b_{j-2} $ for some $j \leq  m$.

\begin{cor} \label{PrimDecomp}
For any compact almost K\"ahler manifold of dimension $2m$, and any $p,q$
we have an orthogonal direct sum decomposition
\[
 \Hh_d^{p,q}  = \bigoplus_{j \geq 0} L^j \left(\Hh_d^{p-j,q-j}\right)_{\textrm{prim}}
\]
where 
\[
\left(\Hh_d^{r,s}   \right)_{\textrm{prim}} :=  \Hh_d^{r,s}    \cap \Ke \Lambda.
\]
\end{cor}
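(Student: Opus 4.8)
The plan is to deduce the Lefschetz decomposition of $\Hh_d^{p,q}$ from the $\mathfrak{sl}(2,\CC)$-representation established in Theorem~\ref{HLef}, by transferring the standard primitive decomposition available at the level of all forms to the harmonic subspace. The key input is that $L$, $\Lambda$, and $H=[L,\Lambda]$ preserve the space $\Ke(\Delta_\delb+\Delta_\mu)=\bigoplus_{p,q}\Hh_d^{p,q}$, which was already shown in the proof of Theorem~\ref{HLef} via $[L,\Delta_\delb+\Delta_\mu]=0$ and $[\Lambda,\Delta_\delb+\Delta_\mu]=0$ (Corollary~\ref{LDeltaetc}). Thus the finite-dimensional $\mathfrak{sl}(2,\CC)$-module structure from Theorem~\ref{HLef} is exactly what is needed.

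\emph{Key steps, in order.} First, I would invoke the general representation theory of $\mathfrak{sl}(2,\CC)$: any finite-dimensional representation decomposes into irreducibles, and in each irreducible the operator $L$ acts injectively starting from the lowest-weight (primitive) vectors, those in $\Ke\Lambda$. Applied to the finite-dimensional module $\bigoplus_{p,q}\Hh_d^{p,q}$, this yields a decomposition
\[
\Hh_d^{p,q} = \bigoplus_{j \geq 0} L^j\!\left(\Hh_d^{p-j,q-j}\cap \Ke\Lambda\right),
\]
because $L$ has bidegree $(1,1)$ and hence maps the primitive part of $\Hh_d^{p-j,q-j}$ into $\Hh_d^{p,q}$; the bigrading is respected since $L$, $\Lambda$ and $H$ are all of pure bidegree. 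Second, I would verify that this abstract decomposition refines along bidegrees exactly as stated, using that $\Ke\Lambda$ restricted to $\Hh_d^{r,s}$ is precisely $\left(\Hh_d^{r,s}\right)_{\textrm{prim}}$ by definition. Third, I would establish orthogonality: the summands $L^j(\cdot)_{\textrm{prim}}$ are $L_2$-orthogonal because distinct weight spaces of an $\mathfrak{sl}(2,\CC)$-representation equipped with a compatible inner product are orthogonal, or directly because $L$ and $\Lambda=L^*$ are adjoint and the standard commutation relations force cross terms to vanish.

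\emph{Main obstacle.} The only real subtlety is ensuring that the classical primitive decomposition, usually stated for the full exterior algebra where $L^{m-k}:P^k\to\cA^{2m-k}$ is injective, transfers cleanly to the harmonic subspace and keeps the bidegree bookkeeping intact. This is handled by the fact that $L$ and $\Lambda$ genuinely preserve $\bigoplus_{p,q}\Hh_d^{p,q}$ as a finite-dimensional $\mathfrak{sl}(2,\CC)$-submodule of $\cA^{*,*}$, so the decomposition of the ambient algebra restricts to a decomposition of the submodule; the injectivity and isomorphism statements then follow from Theorem~\ref{HLef} itself rather than requiring any fresh analysis. I expect the orthogonality to be the step most deserving of a short explicit argument, while the decomposition itself is essentially a formal consequence of $\mathfrak{sl}(2,\CC)$-representation theory once the module structure is in place.
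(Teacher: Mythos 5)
Your proposal is correct and follows exactly the route the paper takes: the corollary is deduced from the finite-dimensional $\mathfrak{sl}(2,\CC)$-representation on $\bigoplus_{p,q}\Hh_d^{p,q}$ established in Theorem \ref{HLef}, with the bidegree bookkeeping and orthogonality (via $\Lambda = L^*$) handled as you describe. The paper simply states that the decomposition "may be deduced directly, as a finite dimensional representation of $\mathfrak{sl}(2,\CC)$," so your write-up is a fleshed-out version of the same argument.
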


This may be deduced directly, as a finite dimensional representation of  $\mathfrak{sl}(2,\CC)$. It follows that one can also express the dimensions of the primitive spaces in terms of successive differences of the numbers $\ell^{p,q}$. 

We have inequalities
\[
\ell^{p,q} \leq \ell^{p+1,q+1} \leq \dots \leq \ell^{p+j,q+ j}
\]
for all $0 \leq p, q \leq m$ and $p + q + 2j \leq m$. 
In particular, note that if $b^k = 0$ for some $k \leq m$, then $ \ell^{p,q}=0$ for all $p+q \leq k$ with $p+q \equiv k \mod 2$.

We give some applications below.

\begin{rmk}
The Hodge-Riemann pairing yields the analogous bilinear Hodge-Riemann relations. 
 Namely,  for any compact almost K\"ahler manifold  of dimension $2m$, if 
$\alpha \in \left(\Hh_d^{p,q}\right)_{\textrm{prim}}$ is a non-zero form,  then
\[
i^{p-q}  Q(\alpha, \bar \alpha) >0,
\]
where the Hodge-Riemann pairing $Q$ is given by 
\[
Q(\alpha ,\beta)  := (-1)^{\frac{(p+q)(p+q-1)}{2})} \int_M \alpha \wedge \beta \wedge \omega^{m-p-q}.
\]
This follows from Corollary \ref{PrimDecomp}, since any primitive $(p,q)$-form $\alpha$ on any almost Hermitian manifold, 
\[
 i^{p-q}  (-1)^{\frac{(p+q)(p+q-1)}{2})} \alpha \wedge \bar \alpha \wedge \omega^{m-p-q} 
\]
is a positive multiple of the volume form $\frac{\omega^m}{m!}$ at any point where $\alpha$ is non-vanishing (c.f. \cite{Huy}, p.39 Corollary 1.2.36). 
\end{rmk}

For the remainder of this section, we restrict to dimension $4$, where there is a version of the Hodge Index Theorem for almost K\"{a}hler $4$-manifolds. First, for any almost Hermitian $4$-manifold, we have the following result:

\begin{lem}\label{h20}
For any almost Hermitian $4$-manifold with a non-integrable almost complex structure, $\Hh_d^{2,0}=0$.  In particular, for any compact non-integrable almost K\"ahler $4$-manifold we have $\ell^{2,0}=\ell^{0,2}=0$.
\end{lem}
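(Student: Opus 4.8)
The plan is to reduce the statement to a pointwise, zeroth-order fact about $\mub$ and then globalize it using ellipticity. First I would observe that any $\alpha \in \Hh_d^{2,0}$ is in particular $d$-closed, so by bidegree reasons all four components of $d$ annihilate it; on a $4$-manifold ($m=2$) the components $\del$ and $\mu$ vanish on $\cA^{2,0}$ automatically for degree reasons, and among the surviving closedness conditions the one I need is $\mub\alpha = 0$. (Equivalently, $\Hh_d^{2,0}\subseteq \Hh_\mub^{2,0}\subseteq \Ke(\mub)$ by Lemma \ref{intersections}.) Thus it suffices to show that the tensorial operator $\mub : \cA^{2,0}\to\cA^{1,2}$ has trivial kernel at every point where $J$ fails to be integrable.

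Next I would carry out the pointwise computation. At each point $x$ one has $\dim_\CC \cA^{2,0}_x = 1$ and $\dim_\CC \cA^{1,2}_x = 2$, so choosing a local $(1,0)$-coframe $\varphi^1,\varphi^2$ I would write $\alpha = f\,\varphi^1\wedge\varphi^2$ and expand $\mub\alpha$ using that $\mub$ is a derivation linear over functions. The terms $\mub\varphi^i \in \cA^{0,2}$ are precisely the Nijenhuis data, via the identity $\mub + \mu = -\tfrac14(N_J\otimes \mathrm{id}_\CC)^*$ of Section \ref{SecPrelim} (note $\mu$ vanishes on $(1,0)$-forms by bidegree). A short computation then expresses $\mub\alpha$ as $f$ times a nonzero element of $\cA^{1,2}$ exactly when $N_J(x)\neq 0$, so $\mub_x$ is injective at every point where $J$ is non-integrable. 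Hence a $d$-harmonic $\alpha$ must vanish on the open set $U = \{x : N_J(x)\neq 0\}$, which is nonempty precisely because $J$ is non-integrable.

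To conclude $\alpha\equiv 0$ on all of $M$ I would invoke unique continuation: $\alpha$ lies in the kernel of $\Delta_d$, which is a Laplace-type elliptic operator (scalar principal symbol), so by the Aronszajn unique continuation theorem a harmonic form vanishing on the nonempty open set $U$ must vanish on the whole connected manifold. This yields $\Hh_d^{2,0}=0$. I expect this globalization to be the main conceptual point, since $U$ need not be dense: the pointwise injectivity kills $\alpha$ only where $N_J\neq 0$, and it is the ellipticity of $\Delta_d$ — not the pointwise algebra — that propagates the vanishing across the region where $J$ happens to be locally integrable.

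Finally, for the \emph{in particular} clause, in the compact almost K\"ahler case Theorem \ref{AKharmonicequal} identifies $\Hh_\delb^{2,0}\cap\Hh_\mu^{2,0} = \Hh_d^{2,0} = 0$, giving $\ell^{2,0}=0$, and the complex-conjugation duality of Theorem \ref{AKharmonicequal}(1) (or Lemma \ref{dualities}(1)) gives $\ell^{0,2}=\ell^{2,0}=0$.
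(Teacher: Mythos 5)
Your proof is correct and follows essentially the same route as the paper: reduce to the vanishing of a zeroth-order component of harmonicity, show that component is pointwise injective on the open set where $N_J\neq 0$, and propagate the vanishing by unique continuation for $\Delta_d$. The only (immaterial) difference is that you use $\mub\alpha=0$ coming from $d\alpha=0$, whereas the paper uses $\mu^*\alpha=0$ coming from $d^*\alpha=0$, both being pointwise injective on $\cA^{2,0}$ over the non-integrability locus.
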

\begin{proof}
Since $\Hh_d^{p,q}$ injects into the cohomology groups $H_J^{p,q}$, triviality of $\Hh_d^{2,0}$ follows directly from Lemma 2.12 of \cite{DrLiZh}, where it is shown that $H^{2,0}_J=0$. We include an argument for completeness.
If the almost complex structure is not integrable, there is an open set $U$ on which $\mu: T^{0,1} M \to \left( T^{1,0} M \wedge T^{1,0} M \right)$ is pointwise nonzero, and therefore pointwise surjective for all points in $U$, since $M$ is a four manifold.
Therefore, 
\[
\mu^* : \wedge^2 T^{1,0}M  \to T^{0,1} M
\]
is pointwise injective on $U$. 
If $\eta \in \Hh_d^{2,0}$, then $\mu^* \eta= 0$, which implies $\eta$ is zero on $U$, hence $\eta$ is zero everywhere \cite{Bar}.
The identities $\ell^{2,0}=\ell^{0,2}=0$ follow from Theorem \ref{AKharmonicequal}.
 \end{proof}

 \begin{thm}
 [Generalized Hodge Index Theorem, c.f. \cite{HZ}] \label{4mnfldIndex}
 For any compact $4$-dimensional almost K\"ahler manifold $M$, we have
\[
\ell^{1,1}=  b_2^- + 1 \text{ and } b_2^+\geq 1,\]
 where the intersection pairing on $H^2(M;\CC)$ has index $(b_2^+,b_2^-)$. 
 \end{thm}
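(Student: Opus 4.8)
The plan is to identify the primitive part of $\Hh_d^{1,1}$ with the space of anti-self-dual harmonic $2$-forms, whose dimension is exactly $b_2^-$. Throughout I would work with the metric and orientation determined by the almost Hermitian structure, so that $\mathrm{vol}=\omega^2/2$ and $\star^2=\mathrm{id}$ on $\cA^2$. This produces the classical orthogonal splitting $\Hh_d^2=\Hh^+\oplus\Hh^-$ of harmonic $2$-forms into $\star$-eigenspaces (self-dual and anti-self-dual), on which the intersection pairing is positive definite (resp. negative definite); hence $\dim_\CC\Hh^\pm=b_2^\pm$. This is the standard Hodge theory of compact oriented Riemannian $4$-manifolds, and it is what I would invoke to detect $b_2^-$.

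First I would record the pointwise identification, valid on any almost Hermitian $4$-manifold and independent of integrability: writing $\cA^{2,\pm}_\CC$ for the complexified $\star$-eigenspaces, one has $\cA^{2,+}_\CC=\CC\omega\oplus\cA^{2,0}\oplus\cA^{0,2}$ and $\cA^{2,-}_\CC=(\cA^{1,1})_{\mathrm{prim}}$. This rests on $\star\omega=\omega$ together with Weil's formula for the action of $\star$ on primitive forms (see \cite{Weil}), which gives $\star\alpha=\alpha$ for $\alpha\in\cA^{2,0}\oplus\cA^{0,2}$ and $\star\alpha=-\alpha$ for primitive $\alpha\in\cA^{1,1}$.

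Next, applying the Lefschetz decomposition of Corollary \ref{PrimDecomp} in bidegree $(1,1)$, and using that $\omega=L(1)$ with the constant $1\in(\Hh_d^{0,0})_{\mathrm{prim}}=\CC$ (so $\CC\omega$ is the lowest $L$-summand, cf. Lemma \ref{wharmonic}), I obtain $\Hh_d^{1,1}=\CC\omega\oplus(\Hh_d^{1,1})_{\mathrm{prim}}$ and therefore $\ell^{1,1}=1+\dim(\Hh_d^{1,1})_{\mathrm{prim}}$. Unwinding the definitions and combining with the previous step, $(\Hh_d^{1,1})_{\mathrm{prim}}=\Hh_d^2\cap(\cA^{1,1})_{\mathrm{prim}}=\Hh_d^2\cap\cA^{2,-}_\CC=\Hh^-$, so $\dim(\Hh_d^{1,1})_{\mathrm{prim}}=b_2^-$ and $\ell^{1,1}=b_2^-+1$. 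Finally $b_2^+\geq 1$, since $\omega$ is a nonzero self-dual harmonic form with $\int_M\omega^2=2\,\mathrm{Vol}(M)>0$, whence $\Hh^+\neq 0$.

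The point requiring the most care --- and the genuine difference from the integrable case --- is that $b_2^+$ must \emph{not} be read off from pure-bidegree harmonic forms. In the K\"ahler case $\Hh^+=\CC\omega\oplus\Hh_d^{2,0}\oplus\Hh_d^{0,2}$, but for non-integrable $J$ one has $\Hh_d^{2,0}=\Hh_d^{0,2}=0$ by Lemma \ref{h20} while $b_2^+$ may be large; the extra self-dual harmonic forms then carry genuinely mixed bidegree and are invisible to the numbers $\ell^{p,q}$. The argument succeeds precisely because it only uses the anti-self-dual side, where the condition ``anti-self-dual'' coincides with the pure-bidegree condition ``primitive $(1,1)$'', so that every class contributing to $b_2^-$ is represented by a $d$-harmonic $(1,1)$-form. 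Establishing this coincidence and confirming that $\Hh^-$ realizes the entire negative part of the intersection form --- equivalently, the $p=q=1$ case of the Hodge--Riemann relations noted after Corollary \ref{PrimDecomp} --- is the crux of the proof.
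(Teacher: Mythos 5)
Your proof is correct, and its core is the same as the paper's: the splitting $\Hh_d^2=\Hh^+\oplus\Hh^-$ with $\dim\Hh^\pm=b_2^\pm$, the pointwise identification of anti-self-dual $2$-forms with primitive $(1,1)$-forms, the resulting identity $\Hh_d^{1,1}=\CC\omega\oplus\Hh^-$, and $b_2^+\geq 1$ from $\omega\in\Hh^+$. The one step you handle differently is the inclusion $\Hh_d^{1,1}\subseteq\CC\omega\oplus\Hh^-$: you import it from the Lefschetz decomposition of Corollary \ref{PrimDecomp} (so ultimately from generalized hard Lefschetz), reading off that the non-primitive part of $\Hh_d^{1,1}$ is exactly $L(\Hh_d^{0,0})=\CC\omega$, whereas the paper gives a short self-contained argument: write $\eta=\gamma+f\omega$ with $\gamma$ anti-self-dual, note both summands are $d$-harmonic since $\star$ commutes with $\Delta_d$, and conclude $df=0$ from $L(df)=0$ and injectivity of $L$ on $1$-forms. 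Both routes are legitimate here since Corollary \ref{PrimDecomp} precedes Theorem \ref{4mnfldIndex}; yours is slightly slicker but leans on the $\mathfrak{sl}(2,\CC)$ machinery, while the paper's is elementary and makes visible exactly which local facts are used.
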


A proof of the equality $\ell^{1,1}=  b_2^- + 1$ first appeared in the preprint \cite{HZ}, following the inequality $\ell^{1,1} \leq b_2^- + 1$ first established in a preprint of the present paper.  The proof of equality in \cite{HZ}  follows arguments previously developed in \cite{DrLiZh}, \cite{LZ}, and \cite{FTAC}, and strongly depends on Theorem \ref{AKharmonicequal}. We include a proof here for completeness.

\begin{proof} 
 Recall that on a 4-manifold, the Hodge star operator induces a decomposition  on the space of $2$-forms into self-dual $\cA^+$ and anti-self-dual $\cA^-$ spaces (as eigenspaces $\pm1$). Since the Laplacian commutes with the Hodge star operator, the $d$-harmonic $2$-forms decompose as
\[\Hh^2_d=\Hh_d^+\oplus \Hh_d^-,\text{ where }\star \Hh^{\pm}_d=\pm\Hh^{\pm}_d.\]
The intersection pairing is positive definite on $\Hh_d^+$, and negative definite on $\Hh_d^-$. 
Since $\omega \in \Hh_d^+$, it follows that $b_2^+ \geq 1$.
We now show that
\[\Hh_d^{1,1}= \Hh_d^- + \CC[\omega]\]
by checking both inclusions.
First, by Lemma \ref{wharmonic} we have  $\omega \in \Hh_d^{1,1}$.
Furthermore, any form in $\Hh_d^-$ must be of pure type $(1,1)$ since a local computation shows there is an orthogonal sum
$\cA^{1,1} = \cA^- \oplus \CC[\omega]$. 

To prove the converse containment, we write any $d$-harmonic $(1,1)$-form uniquely as 
$\eta=\gamma+f\omega$, where $f$ is a complex valued function and $\gamma$ is anti-self dual.
Since $\star$ commutes with $\Delta_d$, it follows that both $\gamma$ and $f\omega$ are $d$-harmonic.
Since $L$ is injective in degree one and $L(df)=df\wedge \omega=0$, it follows that
$f$ is constant. This proves the remaining inclusion $\Hh_d^{1,1}\subseteq \Hh_d^- + \CC[\omega]$.
\end{proof}

\begin{rmk}
In the case that the containment 
$\bigoplus_{p+q=2k} \Hh_d^{p,q} \subseteq \Hh_d^{2k}$ is an equality, such as for K\"ahler manifolds,
one also has the second identity $2\ell^{2,0}+1=b_2^+$.
\end{rmk}

\begin{cor}\label{metricindep}
For any compact almost K\"ahler $4$-manifold
all of the numbers $\ell^{p,q}$ are metric-independent among all almost 
K\"ahler metrics that are compatible with the given almost complex structure.
\end{cor}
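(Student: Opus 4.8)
The plan is to use the Hodge-diamond symmetries available in dimension four to reduce the statement to finitely many entries, and then to exhibit each surviving $\ell^{p,q}$ either as a topological quantity or as an invariant of the almost complex structure $J$ alone. Since $J$ is held fixed while the metric ranges over the compatible almost K\"ahler metrics, this is exactly what metric-independence requires. First I would record the symmetries from Theorem \ref{AKharmonicequal}, namely $\ell^{p,q}=\ell^{q,p}=\ell^{m-p,m-q}=\ell^{m-q,m-p}$ with $m=2$. These collapse every entry of the $3\times 3$ diamond into one of the four classes represented by $\ell^{0,0}$, $\ell^{1,0}$, $\ell^{2,0}$, and $\ell^{1,1}$, so it suffices to treat these four numbers.

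Next I would dispatch the corner and edge terms. The term $\ell^{0,0}=\dim\Hh_d^{0,0}$ counts $d$-harmonic functions, i.e. locally constant ones, so $\ell^{0,0}=b^0$ is topological (and equals $\ell^{2,2}$). For the edge class I would invoke the identity $\Omega^1_\delb=\Hh_d^{1,0}$ established just before Corollary \ref{cor2}; since $\Omega^1_\delb=\Ke(\delb)\cap\cA^{1,0}$ is defined using only $J$, the number $\ell^{1,0}=\dim\Omega^1_\delb$ is independent of the compatible metric.

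For $\ell^{2,0}$ I would split according to whether $J$ is integrable, which is itself a metric-independent dichotomy. If $J$ is non-integrable, Lemma \ref{h20} gives $\ell^{2,0}=\ell^{0,2}=0$. If $J$ is integrable, then $M$ is a compact complex surface and the K\"ahler identity $\Delta_d=2\Delta_\delb$ forces $\Hh_d^{2,0}=\Hh_\delb^{2,0}$, which is isomorphic to the Dolbeault group $H^{2,0}_{\bar\partial}(M)$; hence $\ell^{2,0}$ equals the Hodge number $h^{2,0}$, a holomorphic invariant independent of the metric. In either case $\ell^{2,0}$ is metric-independent.

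Finally, the central term is controlled by the Generalized Hodge Index Theorem (Theorem \ref{4mnfldIndex}), which gives $\ell^{1,1}=b_2^-+1$. Here $b_2^-$ is the dimension of a maximal negative-definite subspace for the intersection pairing on $H^2(M;\CC)$, hence a topological invariant fixed by the signature and the second Betti number, and in particular independent of the metric. Assembling the four classes shows that every $\ell^{p,q}$ is metric-independent among compatible almost K\"ahler metrics. The only genuinely substantive input is Theorem \ref{4mnfldIndex}, which converts the a priori metric-dependent middle dimension into the purely topological quantity $b_2^-+1$; once that is available the remaining entries are either topological or manifestly $J$-invariant, and the sole point requiring care is the bookkeeping of $\ell^{2,0}$ across the integrable and non-integrable cases.
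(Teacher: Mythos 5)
Your proposal is correct and follows essentially the same route as the paper: reduce via the dualities of Theorem \ref{AKharmonicequal} to the classes $\ell^{1,0}$, $\ell^{2,0}$, $\ell^{1,1}$, identify $\Hh_d^{1,0}=\Ke(\delb)\cap\cA^{1,0}$ as a $J$-invariant, kill $\ell^{2,0}$ by Lemma \ref{h20}, and make $\ell^{1,1}$ topological via Theorem \ref{4mnfldIndex}. The only difference is that you spell out the integrable case for $\ell^{2,0}$, which the paper dismisses with ``we may as well assume the almost complex structure is not integrable''; that is a harmless (and slightly more complete) elaboration.
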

\begin{proof}
We may as well assume the almost complex structure is not integrable. 
By Lemma \ref{h20} we have $\ell^{2,0}=\ell^{2,0}=0$.
By Theorem \ref{4mnfldIndex}, $\ell^{1,1}$ is a topological invariant.
Lastly, note that
\[\Hh_d^{1,0}=\Hh_\delb^{1,0}\cap \Hh_\mu^{1,0}= \Ke(\delb)\cap \cA^{1,0}\]
which is metric independent. The dualities
$\ell^{0,1}=\ell^{1,0}=\ell^{2,1}=\ell^{1,2}$
give metric-independence for all remaining bidegrees.
\end{proof}

\begin{rmk}
Arguing as in the corollary above we obtain 
metric-independence for all numbers $h^{p,q}_\delb:=\dim \Hh_\delb^{p,q}$ except for $h^{0,1}_\delb=h^{2,1}_\delb$.
Indeed, for bidegree reasons we have $h^{1,1}_\delb=\ell^{1,1}$ and 
$\Hh_\delb^{p,0}=\Ke(\delb)\cap \cA^{p,0}$ which, together with the Hodge dualities $h_\delb^{p,0}=h_\delb^{2-p,2}$
give metric-independence for $h^{1,0}_\delb=h^{1,2}_\delb$ and $h_\delb^{2,0}=h^{0,2}_\delb$.
In \cite{HZ}, Holt-Zhang construct an example
of an almost complex structure whose $h_\delb^{0,1}$ varies with
different choices of Hermitian metrics. However, it remains an open question to know whether $h_\delb^{0,1}$ is metric-independent among almost 
K\"ahler metrics only.
\end{rmk}

\section{Applications}\label{SecNil}

In general it is very difficult to compute the dimension of the kernel of a self adjoint elliptic operator. In this section we show the above theory can be used to compute harmonic spaces,
and give other applications involving the above obstructions to almost K\"ahler structures. For some examples we use nilmanifolds, which are a quotient of a nilpotent real Lie group $G$ by an integral subgroup $\Gamma$. For this purpose, we first briefly describe how the above theory applies in that setting.

An almost complex structure on the Lie algebra $\mathfrak{g}$ of $G$ defines a left-invariant almost complex structure on the associated nilmanifold.
Also, it defines a bigrading
on the Chevalley-Eilenberg dg-algebra $\cA^*_{\mathfrak{g}_\CC}$ associated to the complexification $\mathfrak{g}_\CC$
of $\mathfrak{g}$ and $M$ inherits an almost complex structure. 
The algebra $\cA^*_{\mathfrak{g}_\CC}$ may be regarded as the complex algebra of $\Gamma$-invariant forms on $G$ and it includes into the complex algebra of forms of $M$ via a quasi-isomorphism compatible with bigradings.
The theory of harmonic forms for almost complex nilmanifolds is developed in \cite{CWDol}.
For our purposes here, it suffices to note that, given a left-invariant Hermitian metric, we have an inclusion
\[{}^L\Hh_\delta^{p,q}:=\Ke(\Delta_\delta)\cap\cA^*_{\mathfrak{g}_\CC}\subseteq \Hh_\delta^{p,q}\]
of left-invariant $\delta$-harmonic forms into all $\delta$-harmonic forms, and that
the almost K\"{a}hler package is equally valid on these subspaces.
We see below that, in some situations, the dimensions of the left-invariant harmonics, and the bounds offered by 
the almost K\"{a}hler package and topology, provide sufficient information to compute the dimensions $\ell^{p,q}$ of the true harmonic spaces.

\subsection{Kodaira-Thurston manifold} \label{KT}
The Kodaira-Thurston manifold was originally studied by Kodaira as a complex manifold  \cite{Kod}, and by Thurston as the first example of a symplectic manifold which is non-K\"ahler, \cite{Thur}. We follow the presentation in \cite{BM} as a nilmanifold, giving here an almost K\"ahler structure.

The Kodaira-Thurston manifold is the $4$-dimensional nilmanifold defined as the quotient 
\[
\mathrm{KT}=H_\Z \times \Z \backslash H \times \R
\]
 where $H$ is the $3$-dimensional Heisenberg Lie group, and $H_\Z$ is the integral subgoup. The Lie algebra is spanned by $X,Y,Z,W$ where the only non-zero bracket is $[X,Y] = -Z$. It has Betti numbers $b^1=3$ and $b^2=4$ and topological index $(2,2)$.
 We remark that $L: H^1 \to H^3$ is not an isomorphism, i.e. the hard Lefschetz duality does not hold on cohomology.

\begin{prop}
For any left-invariant almost K\"{a}hler structure on the Kodaira-Thurston manifold, the numbers $\ell^{p,q}$ are given by 
\[
\xymatrix@C=.1pc@R=.1pc{
 &&1\\
 &1&&1\\
 0&&3&&0\\
 &1&&1\\
 &&1
}
\]
\end{prop}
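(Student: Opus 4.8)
The plan is to compute the numbers $\ell^{p,q}$ for a left-invariant almost K\"ahler structure on $\mathrm{KT}$ by combining the left-invariant harmonic computation with the topological and duality constraints of the almost K\"ahler package. First I would fix an explicit left-invariant almost K\"ahler structure: using the coframe dual to $X,Y,Z,W$ with the single nonzero bracket $[X,Y]=-Z$, I would write down a compatible almost complex structure $J$ and a compatible symplectic form $\omega$, producing an explicit $(1,0)$-coframe, say $\varphi^1,\varphi^2$, in terms of which one can read off the bidegrees of the structure equations $d\varphi^i$. The nonintegrability is detected by a nonzero $\mu$ (equivalently $\mub$) component, which I expect to enter precisely because of the $[X,Y]=-Z$ bracket. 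From the structure equations I would identify the four components $\mub,\delb,\del,\mu$ of $d$ on the finite-dimensional Chevalley--Eilenberg algebra $\cA^*_{\mathfrak{g}_\CC}$.

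Next I would compute the left-invariant harmonic spaces ${}^L\Hh_d^{p,q}=\Ke(\Delta_\delb+\Delta_\mu)\cap\cA^{p,q}_{\mathfrak{g}_\CC}$ directly, using the characterization from Lemma \ref{intersections} that $\Hh_\delb^{p,q}\cap\Hh_\mu^{p,q}$ is the simultaneous kernel of $\delb,\delb^*,\mu,\mu^*$. On a nilmanifold with a left-invariant metric these are finite linear-algebra computations on each bidegree component, so this step is routine though somewhat tedious. This yields lower bounds ${}^L\ell^{p,q}\le\ell^{p,q}$, and I expect it to give exactly the stated diamond. To pin down the true (not merely left-invariant) numbers, I would invoke the external constraints: Lemma \ref{wharmonic} gives $\ell^{0,0}=\ell^{2,2}=1$ and $\ell^{1,1}\ge 1$; Lemma \ref{h20} gives $\ell^{2,0}=\ell^{0,2}=0$ since the structure is non-integrable; the dualities of Theorem \ref{AKharmonicequal} reduce everything to $\ell^{1,0}$ and $\ell^{1,1}$; and Theorem \ref{4mnfldIndex} gives $\ell^{1,1}=b_2^-+1=2+1=3$ from the topological index $(2,2)$.

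It remains to determine $\ell^{1,0}$. Theorem \ref{AKbounds}(1) together with complex conjugation gives $2\ell^{1,0}\le b^1=3$, hence $\ell^{1,0}\le 1$. For the matching lower bound I would exhibit a single holomorphic $1$-form: since $\Omega^1_\delb=\Ke(\delb)\cap\cA^{1,0}=\Hh_d^{1,0}$ by the bidegree argument in the text, it suffices to find one nonzero $(1,0)$-form annihilated by $\delb$, which the structure equations should furnish (the $W$-direction, being central and closed, typically contributes one such form). This gives $\ell^{1,0}=1$ and, by the dualities, $\ell^{0,1}=\ell^{2,1}=\ell^{1,2}=1$, completing the diamond.

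\textbf{The main obstacle} is the passage from left-invariant to genuine harmonic forms: a priori ${}^L\Hh_d^{p,q}\subseteq\Hh_d^{p,q}$ could be strict. The point of structuring the argument as above is to sidestep any hard analysis — the upper bounds come entirely from topology (Betti numbers and the index theorem) and the almost K\"ahler dualities, while the lower bounds come from explicit left-invariant forms, and in every bidegree these two bounds coincide. Thus no separate argument that harmonic forms are automatically left-invariant is needed; the package forces equality. I would close by remarking that the comparison with the symplectic-harmonic numbers (where every degree-$\le 2$ class is representable) shows the inclusion $\bigoplus_{p+q=k}\Hh_d^{p,q}\hookrightarrow\Hh_{sym}^k$ is strict here, since $\sum_{p+q=2}\ell^{p,q}=3<4=b^2$.
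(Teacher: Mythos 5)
Your proposal is correct in outline and shares most of its skeleton with the paper's proof: $\ell^{2,0}=\ell^{0,2}=0$ from Lemma \ref{h20}, $\ell^{1,1}=b_2^-+1=3$ from Theorem \ref{4mnfldIndex}, the dualities of Theorem \ref{AKharmonicequal}, and the Betti bound $2\ell^{1,0}\leq b^1=3$ giving $\ell^{1,0}\leq 1$. The one step where you genuinely diverge is the lower bound $\ell^{1,0}\geq 1$. The paper gets this indirectly via the Fr\"olicher-type spectral sequence of \cite{CWDol}: it computes $H_\mub^{1,0}\cong\CC$ and $H_\mub^{0,1}\cong\CC^2$ on invariant forms, deduces $\dim H_{\Dol}^{1,0}\leq 1$ and $\dim H_{\Dol}^{0,1}\leq 2$, and uses convergence to de Rham cohomology with $b^1=3$ to force $H_{\Dol}^{1,0}\neq 0$. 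You instead propose to exhibit an invariant $\delb$-closed $(1,0)$-form by hand. That route is legitimate and more elementary, and it can be made uniform over all left-invariant structures: $d$ on invariant $1$-forms has $1$-dimensional image (spanned by $x\wedge y$), so its restriction to the $2$-dimensional space $\cA^{1,0}_L$ has nontrivial kernel, and a $d$-closed $(1,0)$-form is in particular $\delb$-closed, hence lies in $\Omega^1_\delb=\Hh_d^{1,0}$.

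Two soft spots in your write-up should be repaired. First, the heuristic that ``the $W$-direction, being central and closed, typically contributes one such form'' is not right: for a closed real $1$-form $\alpha$ one only gets $\delb\alpha^{1,0}=-\del\alpha^{0,1}$, which need not vanish, and for the standard $J$ (with $JZ=W$) the closed holomorphic form actually comes from the $x,y$-plane, not from $w$. The rank argument above is the correct replacement. Second, the proposition quantifies over \emph{all} left-invariant almost K\"ahler structures, so fixing one explicit $J$ and $\omega$ at the outset does not suffice; fortunately every ingredient you use (topology, the index theorem, the dualities, and the rank argument for $\ell^{1,0}\geq 1$) is uniform in the choice of structure, so the proof goes through once this is made explicit.
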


\begin{rmk}
To contrast, for any left-invariant integrable  structure on the Kodaira-Thurston manifold,
the numbers $\ell^{p,q}=h_\delb^{p,q}:=\dim \Hh_\delb^{p,q}$ are given by
\[
\xymatrix@C=.1pc@R=.1pc{
 &&1\\
 &2&&1\\
 2&&4&&2\\
 &1&&2\\
 &&1
}
\]
\end{rmk}

\begin{proof}
By Lemma \ref{h20} we have $\ell^{2,0}=\ell^{0,2}=0$, and by Theorem \ref{4mnfldIndex} we have $\ell^{1,1}=3$.
By Theorem \ref{AKbounds} we know that $2\ell^{1,0}\leq b^1=3$ and so $\ell^{1,0}\in \{0,1\}$.
To prove that $\ell^{1,0}\neq 0$, note that for any almost K\"{a}hler manifold we may identify the space 
$\Hh_d^{1,0}=\Ke(\delb)\cap \cA^{1,0}$ with the Dolbeault cohomology 
\[H^{1,0}_{\Dol}:=\Ke(\mub)\cap\Ke(\delb)\cap \cA^{1,0}\]
introduced in \cite{CWDol}. Note that this identification is special to the almost K\"{a}hler case since, for a general almost complex structure,
\[\Ke(\delb)\cap\cA^{1,0}\neq \Ke(\delb)\cap \Ke(\mub)\cap\cA^{1,0}.\]
The Dolbeault cohomology has the advantage that there is a Fr\"{o}licher-type spectral sequence, 
with $E_1=H_{\Dol}$, and which converges to complex de Rham cohomology. 

We will not need the full details and machinery, but rather compute just one page and use the convergence to complex de Rham cohomology to conclude that there is a non-zero left invariant $(1,0)$-form which is in $H^{1,0}_{\Dol}$, and therefore $\ell^{1,0} = 1$. First, we can compute the groups 
\[
H_\mub^{p,q}:= {\Ke  \left( \mub: \cA^{p,q}_L \to \cA^{p-1,q+2}_L \right) \over \Img  \left(\mub: \cA^{p+1,q-2}_L \to \cA^{p,q}_L \right)}
\] 
for left invariant forms, to obtain
\[H_\mub^{0,1}=\CC^2\text{ and }H_\mub^{1,0}=\CC.\]
To see this, note as in the proof of Lemma \ref{h20}, that $\mub:\cA^{1,0}_{L}\cong\CC^2 \to \cA^{0,2}_{L}\cong \CC$ is surjective and that $\mub$ is trivial on $\cA^{0,1}_{L}\cong\CC^2$.
Since $H_{\Dol}^{p,q}\cong H^q(H_\mub^{p,*},\delb)$, we have inequalities 
\[2= \dim H_{\mub}^{0,1}\geq \dim H_{\Dol}^{0,1}\text{ and }
1= \dim H_{\mub}^{1,0}\geq \dim H_{\Dol}^{1,0}.
\]
But convergence of the spectral sequence gives 
\[\dim H_{\Dol}^{0,1}+\dim H_{\Dol}^{1,0}\geq b^{k}=3.\]
Combining the two inequalities we deduce that $H_{\Dol}^{1,0}\neq 0$.

The remaining numbers follow from the various dualities.
\end{proof}

Note that we used left-invariant forms to compute dimensions of complete (not necessarily left invariant) harmonic spaces, via the comparison with Dolbeault cohomology and its induced bigrading. Similarly, one can use a comparison with the bigrading of de Rham cohomology. Recall from Remark \ref{Dolrema} that 
there are inequalities $\ell^{p,q}\leq h_{\dR}^{p,q}$, where $h_{\dR}^{*,*}:=\dim E_\infty^{*,*}$ 
is given by the dimensions of the de Rham page of the Fr\"{o}licher-type spectral sequence of \cite{CWDol}. It follows from Nomizu's Theorem \cite{Nomizu} that there are equalities $^{L}h_{\dR}^{*,*}=h_{\dR}^{*,*}$, 
so one may compute these numbers using only left-invariant forms.
Therefore, the inequality $\ell^{p,q}\leq h_{\dR}^{p,q}$ allows one, in good circumstances, to determine the true numbers $\ell^{p,q}$ from the left-invariant numbers.

\subsection{Generalized Hard Lefschetz Theorem} 

As noted after Corollary \ref{PrimDecomp}, the hard Lefschetz Theorem implies that the space of pure $d$-harmonics forms are zero when certain higher Betti numbers vanish. For example, if $M$ is a compact symplectic manifold of dimension $2m$, and any of the Betti numbers $b^3, b^5, \ldots ,b^{2j+1}$ are zero, with $1< 2j+1< 2m$, then for any compatible almost complex structure, we must have $\Omega_\delb^1= 0$.  

Equivalently, on any compact $2m$-manifold, if some $J$ has $\Omega_\delb^1 \neq 0$ and any of $b^3, b^5, \ldots b^{2j+1}$ are zero, with  $1< 2j+1< 2m$, then there is no compatible symplectic form. 

We conclude with a nil-manifold example, which shows that the generalized hard Lefschetz theorem
detects non-existence of invariant almost K\"ahler structures. Consider the real $4$-dimensional nilpotent filiform Lie algebra with basis $X_1 , X_2, X_3, X_4$
and only non-zero brackets
\[
[X_1 ,X_i  ] = X_{i+1} \quad \textrm{for} \quad i=2,3.
\]
One can check that the Betti numbers of the compact quotient filiform manifold $\Gamma  \backslash G$, where $\Gamma$ is a discrete subgroup of the simply connected Lie group $G$, are $b^1= b^2 = b^3 = 2$. 

Consider the non-integrable left invariant almost complex structure given by $JX_1 = X_2$ and $J X_3  =X_4$. 
Letting $A= X_1 -iJ X_1$ and $B= X_3 -iJX_3$, the dual elements $a$ and $b$ are a basis for the invariant $(1,0)$-forms. One can check that the only non-zero components of the exterior differential are
\[
\mub b = \frac{1}{2i} \bar a \bar b, \quad \quad 
\delb b =\frac{1}{2i}  \left( a \bar b - b \bar a \right) - i a \bar a,   \quad \quad
\del b = \frac{1}{2i} ab,
\]
and their conjugates. We'll use hard Lefschetz duality to show there is no left invariant metric making this almost complex manifold into an almost K\"ahler manifold.

First, independent of metric we have $a \in \Hh^{1,0}_{\delb} \cap \Hh^{1,0}_{\mu}$,  so if there were a metric making this almost K\"ahler, then also $\bar a \in \Hh^{0,1}_{\delb} \cap \Hh^{0,1}_{\mu}$, and since $b^1=2$, it follows that for any almost K\"ahler metric $\ell^{0,1} = \ell^{1,0} = 1$. Also, by duality, $\ell^{2,1} = \ell^{1,2} = 1$ as well.

It is straightforward to check that a real basis for the left invariant real $(1,1)$-forms is given by 
\[
\{i a \bar a, i b \bar b , i \left( a \bar b + b \bar a \right) , \left( a \bar b - b \bar a \right) \}.
\]
and that a basis for the $\delb$-closed left invariant real $(1,1)$-forms is given by
\[
\{i a \bar a, i \left( a \bar b + b \bar a \right) \}.
\]
Then any compatible invariant symplectic form can be written as $\omega =  i \alpha 
 a \bar a + i \beta \left( a \bar b + b \bar a \right) $, for some constants $\alpha , \beta$. 
 
 The hard Lefschetz duality theorem implies that
 \[
 L(a) =  i \beta a  b \bar a  \in \Hh^{2,1}_{\delb} \cap   \Hh^{2,1}_\mu.
 \]
 But, $\delb (ab) = \frac i 2 a b \bar a$, so that $L(a) \in \mathrm{Im} (\delb)$ and therefore $\delb^* L(a)  \neq 0$, unless $\beta = 0$. But then $\omega = i \alpha a \bar a$ is degenerate, which is a contradiction.
 
 We note that this manifold does admit a symplectic form. One example is $\omega = x_1 x_4 + x_2 x_3$, which is almost K\"ahler for the metric making $\{X_i\}$ orthonormal, and has $J'X_1 = X_4$ and $J'X_2 = X_3$. This manifold does not admit any integrable almost complex structure, as pointed out to us by Aleksandar Milivojevic. In this case,
  the numbers $\ell^{*,*}$ are given by 
\[
\xymatrix@C=.1pc@R=.1pc{
 &&1\\
 &0&&0\\
 0&&2&&0\\
 &0&&0\\
 &&1
}.
\] 
  
 \subsection{Laplacian identity detects symplectic manifolds with almost complex structures that admit no compatible almost K\"ahler structure}
 
 Consider the $6$-dimensional nilpotent real Lie algebra with basis $\{X_1, \ldots , X_6\}$ and only non-zero brackets given by 
 \[
 [X_1 , X_3 ] = [X_2, X_4] = X_5 \quad \textrm{and} \quad [X_1, X_4] = -[X_2,X_3] = X_6
 \]
 This is considered in \cite{CFGU2}, denoted as $\mathfrak{h}_5$,  and induces a nilmanifold with nilpotent complex structure. Let $G$ be the simply connected group associated to $\mathfrak{h}_5$, and let $\Gamma \backslash G$ be a nilmanifold where $\Gamma$ is a discrete lattice.
 
 Note $\Gamma \backslash G$ is a symplectic manifold, with symplectic form given by 
 \[
 \omega = x_1 x_5 + x_2 x_6 + x_3 x_6,
 \]
 where $x_i$ is dual to $X_i$. 
 
 Consider the almost complex structure given by 
  \[
 JX_1  = X_2 \quad J X_3 = -X_4 \quad JX_5 = -X_6.
 \]
 Define generators $A := X_5 - iJX_5 = X_5 +iX_6$, $B := X_1 - iJX_1= X_1-iX_2$, and $C := X_3 - iJX_3 = X_3+iX_4$
 so that
 \[
 [B,C] = [X_1-iX_2,  X_3+iX_4] = 2(X_5+iX_6) = 2A.
 \]
 Letting $a,b,c$ denote the duals of $A,B,C$ respectively, we have \[da= -\frac 1 2 bc= \del a.\]
  Therefore, $a \in \Ke \left( \Delta_\delb + \Delta_\mu \right)$ but  $ a \notin \Ke \left( \Delta_\del + \Delta_\mub \right)$. Hence,
 \[
  \Delta_\delb + \Delta_\mu  \neq  \Delta_\del + \Delta_\mub,
 \]
 so by Theorem \ref{AKDeltas}, we can conclude that this almost complex structure does not admit a compatible symplectic structure. Of course, $db = dc =0$ as well, so that $J$ is integrable. So, one could arrive at this same conclusion since $\Gamma \backslash G$ is not K\"ahler, since for example $\ell^{1,0} \geq 3  > \frac{b^1}{2} = 2$.
 
 But, the example shows even more: if $(M,J')$ is any almost complex manifold (integrable or not), then the Cartesian product 
 $\Gamma \backslash G \times M$ with product almost complex structure $J \times J'$, does not admit any metric making the product an almost K\"ahler manifold. Indeed, the pullback of the form $a$ to the product still exhibits that $ \Delta_\delb + \Delta_\mub  \neq  \Delta_\del + \Delta_\mu$ on $\Gamma \backslash G \times M$. 
 
In particular, if $M$ is symplectic then $\Gamma \backslash G \times M$ always admits a symplectic form, but never a symplectic form compatible with this almost complex structure.

\bibliographystyle{alpha}

\bibliography{../biblio}

\end{document}